\newcommand{\CM}{Cohen-Macaulay}
\newcommand{\wrt}{with respect to}
\newcommand{\n}{\mathfrak{n} }
\newcommand{\m}{\mathfrak{m} }
\newcommand{\ve}{\varepsilon}
\newcommand{\pd}{\operatorname{projdim}}
\newcommand{\socle}{\operatorname{Soc}}
\newcommand{\Syz}{\operatorname{Syz}}
\newcommand{\Hom}{\operatorname{Hom}}
\newcommand{\Ext}{\operatorname{Ext}}
\theoremstyle{plain}
\newtheorem{theorem}{Theorem}[section]
\newtheorem{corollary}[theorem]{Corollary}
\newtheorem{lemma}[theorem]{Lemma}
\newtheorem{proposition}[theorem]{Proposition}
\theoremstyle{definition}
\newtheorem{remarks}[theorem]{Remarks}
\newtheorem{remark}[theorem]{Remark}
\newtheorem{example}[theorem]{Example}
\begin{document}
\title[Dual Hilbert-Samuel polynomial]{The dual Hilbert-Samuel function of a Maximal Cohen-Macaulay module}
\author{Tony~J.~Puthenpurakal}
\address{Department of Mathematics, IIT Bombay, Powai, Mumbai 400 076, India}
\email{tputhen@math.iitb.ac.in}
\author{Fahed~Zulfeqarr}
\address{Department of Mathematics, IIT Bombay, Powai, Mumbai 400 076, India}
\date{today}
\subjclass{Primary    13D45 ; Secondary 13A30}
\keywords{multiplicity,  reduction, Hilbert-Samuel polynomial }

\begin{abstract}
Let $R$ be a Cohen-Macaulay local ring with a canonical module $\omega_R$. Let $I$ be an $\m$-primary ideal of $R$ and  $M$, a  maximal Cohen-Macaulay $R$-module. We call the function $n\longmapsto \ell \left( \Hom_R(M,{\omega_R}/{I^{n+1} \omega_R})\right)$ the {\it dual Hilbert-Samuel function of $M$ with respect to $I$}. By a result of Theodorescu  this function is a polynomial function. We study its first two normalized coefficients.
\end{abstract}
\date{\today}
\maketitle

\section*{Introduction}
\noindent Let  $(R,\m )$ be a Cohen-Macaulay local ring of dimension $d$ with a canonical module $\omega_R$. If the ring $R$ is clear from context we write $\omega_R$ as $\omega$. Let $I$ be an $\m$-primary ideal and $M$, a finitely generated $R$-module. \\
\indent The Hilbert-Samuel function of $M$ with respect to $I$ is the function
\[
n\longmapsto \ell\left( M\otimes \frac{R}{I^{n+1}}\right)\quad \text{for all}\  n\geqslant 0.
\]
It is well-known (see \cite[4.6.2]{BH}) that for all large values of $n$, this function
is given by a polynomial of degree equal to $\dim M$. This polynomial is called the {\it Hilbert-Samuel polynomial of $M$} with respect to $I$. Its normalized leading coefficient $e_0^I(M)$ is called the {\it multiplicity of $M$ with respect to $I$}.\\
\indent  When $M$ is Cohen-Macaulay $R$-module of dimension $r$, we define the {\it dual Hilbert-Samuel function with respect to $I$} by
\[
n \longmapsto \ell\left( \Ext^{d-r}_R(M,\frac{\omega}{I^{n+1}\omega})\right)\quad\mbox{for all}\;n\geqslant 0,
\]
We call the power-series
\[
D(M,t) =  \sum_{n\geq 0}\ell\left( \Ext^{d-r}_R(M,\frac{\omega}{I^{n+1}\omega})\right)z^n
\]
as the \emph{dual Hilbert-Samuel series} of $M$ \wrt \ $I$.
We concentrate on the case when $\dim M=\dim R$, i.e., when $M$ is a maximal \CM \  (MCM) $R$-module.

 For $i\geqslant 0$ we call the function
\[
n\longmapsto \ell\left( \Ext_R^i(M,\frac{\omega}{I^{n+1}\omega})\right)\quad\mbox{for all}\;\; n\geqslant 0
\]
as the {\it $i$-th dual Hilbert-Samuel function of $M$ with respect to} $I$.  We study these functions for $i=0,\;1$ and when $M$ is MCM. Using Theodorescu's result \cite[4]{Theo}, it follows that this function is given by a polynomial in $n$ for all large values of $n$.
         We denote this polynomial by $\varepsilon^i_M(I,t)$ and call it the {\it $i$-th dual Hilbert-Samuel polynomial of $M$ with respect to $I$}. Furthermore we have
\begin{itemize}
\item [$($\rm 1$)$] for $i=0$, the dual Hilbert-Samuel polynomial $\varepsilon^0_M(I,t)$
is of degree $d$.
\item [$($\rm 2$)$] for  each $i>0$,  the $i$-th dual Hilbert-Samuel polynomial $\varepsilon^i_M(I,t)$ is of degree at most $ d-1$.
\end{itemize}
Therefore the polynomial $\varepsilon^0_M(I,t)$ can be written  in the following form
\[
\varepsilon^0_M(I,t)=\sum_{i=0}^d(-1)^ic_i^I(M) \binom{t + d - i}{d-i}.
\]
We call the integers \;$c_0^I(M),c_1^I(M),...,c_d^I(M)$ \;the {\it dual Hilbert coefficients of} $M$ with respect to $I$. In Proposition \ref{hiltcoef} we show that
\[
c_0^I(M)=e_0^I(M).
\]
This  also shows  that if $J$ is a  reduction of $I$ \wrt \ $M$ then $c_0^I(M) = c_0^J(M)$.

One might also try to plausibly study the "Dual Hilbert function" of a MCM module $M$, i.e., the function
\[
\delta_I(M,n) =  \ell \left( \Hom_R(M, I^n\omega/I^{n+1}\omega) \right) \quad \text{for} \ n \geq 0.
\]
However we feel that it is not as interesting as the Dual Hilbert-Samuel function for the following reasons: Assume $R$ is Gorenstein and $I = \m$.
\begin{itemize}
  \item The dual Hilbert function is
  \[
  \delta_\m(M,n) =  \mu(M)\ell\left(\m^n/\m^{n+1} \right).
  \]
  Here $\mu(M) $ denotes the number of minimal generators of $M$.
  Thus this function tells very little information on $M$.
  \item
  If  $J$ is a minimal reduction of $\m$ \wrt \   $M$  then it can be easily shown that
  the normalized leading coefficient of the polynomial function $\delta_J(M,n)$ is  $e_0^J(M) = e_0^\m(M)$. Using the above calculation we get
  that the  normalized leading coefficient of $\delta_\m(M,n)$ is  $\mu(M)e_0^\m(A)$. Since $M$ is MCM we get that
  $e_0^\m(M) = \mu(M)e_0^\m(A)$ if and only if $M$ \emph{is free.} Thus for interesting cases we do  have that
  leading coefficients of $\delta_J(M,n)$ and $\delta_\m(M,n)$ are different.
\end{itemize}

A crucial property of the dual Hilbert-Samuel function is that it behaves well modulo  a sufficiently general element in $I \setminus \m I$.  More specifically in Proposition  \ref{behevsuper} we show the following:
Let $x \in I\setminus \m I$ be sufficiently general.
Set $S=R/xR$, $J=I/{(x)}$ and $N=M/{xM}$. We prove

$$ c_i^J(N) = c_i^I(N) \quad \text{for} \ i = 0,\ldots,d-1. $$

In Theorem \ref{polydeg} we show that if $M$ is not a free MCM $R$-module then the function
\begin{equation*}
n\longmapsto \ell \left( \Ext^1_R(M,\frac{\omega}{{\m}^{n+1}\omega})\right)\quad \mbox{for }\;n\geqslant 0 \tag{*}
\end{equation*}
is a polynomial type of degree $d-1$. Example 2.6 shows that $\deg \varepsilon^1_M(I,t)$ can be  $-\infty$ in general.
A consequence of (*) is that if $M$ is a non-free MCM $R$-module then
\[
\mu(M)e_1(\omega_A) > c_1(M) + c_1(\Syz^R_1(M)).
\]

In case the ring $R$ is Gorenstein and its associated ring $G_I(R)$ is Cohen-Macaulay, we prove (see Theorem \ref{relbetcoeffs}) that if $r$ is a reduction number of $I$ then we have
\[
c_1^I(M) \geqslant r \cdot e_0^I(M)- \sum_{j=0}^{d}\sum_{n=0}^{r-1}\binom{\;d\;}{j}\ell\left(\Ext^j_R(M,\frac{R}{{I}^{n-j+1}}) \right).
\]
In view of the above result we study the function
\[
\Phi^I(M)=\sum_{j=0}^{d}\sum_{n=0}^{r-1}\binom{\;d\;}{j}\ell\left(\Ext^j_R(M,\frac{R}{{I}^{n+1-j}}) \right).
\]
If $x\in I \setminus  \m I$ is an $R$-superficial element then we prove
\[
\Phi^I(M)\geqslant \Phi^{I/(x)}(M/xM).
\]

The dual Hilbert-Samuel function behaves well \wrt \  the basic properties just as the usual Hilbert-Samuel function. However
for computational reasons it compares poorly \wrt \ the usual Hilbert function.
In section  6 we compute the dual Hilbert-Samuel series of an Ulrich module $M$ over a Gorenstein local ring $R$ such that $G_\m(R)$ is also Gorenstein. This computation is modulo a result from \cite{TJPW}.

We now describe in brief the contents of this paper. In section 1 we introduce notation
and discuss a few preliminary facts that we need. In section 2 we prove that $c_0^I(M) = e_0^I(M)$.  In section 3 we discuss the behavior of the functions
$\ve^0, \ve^1$ modulo \ a general element in $I \setminus \m I$. In section 4 we prove that the polynomial $\ve^1_M(\m,t)$ is either zero or a
polynomial of degree $d-1$. In section 5 we discuss a lower bound on $c_1^I(M)$. Finally in the last section we  compute the dual Hilbert-Samuel series of an Ulrich module $M$ over a Gorenstein local ring $R$ such that $G_\m(R)$ is also Gorenstein.
\section{Notation and Preliminaries}
\noindent Throughout this paper unless otherwise stated we assume that $R$ is a Cohen-Macaulay (CM) local ring of dimension $d$ with maximal ideal $\m$ and that  residue field $R/{\m}$ is infinite. We also assume that $R$ has a canonical module $\omega$. Let $I$ be an $\m$-primary ideal and $M$, a finitely generated  $R$-module.

\s {\bf Notation :}\label{notations1}
 We denote the Rees Algebra of ideal $I$ by $\mathcal R(I)$, the associated graded ring with respected to $I$ by $G_I(R)$ and their associated graded modules by $\mathcal R(I,M)$ and $G_I(M)$ respectively, i.e.,
\begin{eqnarray*}
\mathcal R(I)=\bigoplus_{n\geqslant 0} I^n,&&\quad G_I(R)=\bigoplus_{n\geqslant 0}\frac{I^n}{I^{n+1}}\\
\mbox{and}\quad \mathcal R(I,M)=\bigoplus_{n\geqslant 0} I^nM,&& G_I(M)=\bigoplus_{n\geqslant 0}\frac{I^nM}{I^{n+1}M}
\end{eqnarray*}
Clearly $\mathcal R(I,M)$ is a finitely generated $\mathcal R(I)$-module and $G_I(M)$ is a finitely generated $G_I(R)$-module.\\
\indent Since $I$ is $\m$-primary ideal in $R$ then by \cite[4.5.13]{BH}, $l(I)=\dim R=d$. Using \cite[3(c)]{Theo}, since $\omega$ is faithful $R$-module, we get $l_{\omega}(I)=l(I).$ Thus $l_{\omega}(I)=d$.

\begin{remarks}\label{introfrems}
From \cite[4]{Theo}, it follows that the function
\[
 n \longmapsto  \ell\left( \Ext_R^i(M,\frac{\omega}{I^{n+1} \omega})\right)\quad\mbox{for each}\;i\geqslant 0
 \]
 is given by a polynomial, say $\varepsilon^i_M(I,n)$, in $n$ for all $n\gg 0$ and
\begin{equation*}
\deg \varepsilon^i_M(I,t) \leqslant \max \left\lbrace \dim \Ext_R^i(M,\omega),l_{\omega}(I)-1\right\rbrace . \tag{*}
\end{equation*}
Moreover equality holds if $\dim \Ext_R^i(M,\omega) \geqslant l(I)=d$. If $M$ is an $MCM$ $R$-module then using (*) we get that the function
\begin{itemize}
\item [$($\rm a$)$] $n\longmapsto \ell\left( \Hom_R(M,{\omega}/{I^{n+1}\omega})\right)$ is a polynomial type of degree $d$.
\item [$($\rm b$)$] $n\longmapsto \ell\left( \Ext^1_R(M,{\omega}/{I^{n+1}\omega})\right)$ is a polynomial type of degree at most $d-1$.
\end{itemize}
\end{remarks}

\section{Relation with Hilbert Coefficients}
\noindent In this section we establish an equality between $e^I_0(M)$ and the dual Hilbert coefficient $c_0^I(M)$ of $M$.
\s \label{notaofext}
\noindent{\bf Notation:} We set
\begin{eqnarray*}
\mathcal F \equiv \{M^{\dagger}_n\}_{n\geqslant 0} &\mbox{where} & M^{\dagger}_n = \Hom_R(M,I^n\omega)\quad\mbox{for all}\;\;n\geqslant 1\\
E^i_I(M) = \bigoplus_{n\geqslant 0} \Ext^i_R(M,I^n \omega) &\mbox{and}&  D^i_I(M) =\bigoplus_{n\geqslant 0} \Ext^i_R(M,\omega/{I^n \omega}).
\end{eqnarray*}

\begin{remarks}\label{hilcof}$~$
\begin{itemize}
\item [$($\rm a$)$] It can be easily seen that $\mathcal F$ is an stable $I$-filtration. Therefore by \cite[11.4]{Atiyah}, the function
\[
n\longmapsto \ell\left( M^{\dagger}/M_n^{\dagger}\right)
\]
is given by a polynomial of degree $\dim M^{\dagger}=d$, for all $n\gg 0$. Further the normalized leading coefficient of this function is $e_0^I(M^{\dagger})$.
\item [$($\rm b$)$]\label{relofmul}
Notice that $M$ and $\Hom_R(M,\omega)$ are both $MCM$ $R$-modules, It is well known and can be easily verified that
\[
e_0^I(M^{\dagger})=e_0^I(M).
\]
\item[$($\rm c$)$]
Let $E = \bigoplus_{n\geq 0}E_n$ be a finitely generated $\mathcal{R}(I)$-module such that $\ell_R(E_n) < \infty$ for all $n \geq 0$. It is well
known that the function $n \mapsto \ell(E_n)$ is polynomial of degree $\leq d -1$.
\end{itemize}
\end{remarks}

\indent The following lemma is crucial.

\begin{lemma}\label{hiltfgext1}
Let $M$ be a finitely generated $R$-module. Then for each $i\geqslant 0$, $E^i_I(M)$  is finitely generated graded ${\mathcal R}(I)$-module.
\end{lemma}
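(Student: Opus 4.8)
The plan is to reduce the statement to a standard finiteness fact about Ext over a Noetherian graded ring, namely that $\Ext^i_S(\mathcal{N}, \mathcal{L})$ is a finitely generated graded $S$-module whenever $S$ is Noetherian graded, $\mathcal{N}$ is a finitely generated graded $S$-module with a finite free resolution in each degree range, and $\mathcal{L}$ is finitely generated graded. Here the relevant ring is the Rees algebra $S = \mathcal{R}(I)$, which is Noetherian (since $R$ is Noetherian), the module $\mathcal{L}$ is $\mathcal{R}(I,\omega) = \bigoplus_{n \geq 0} I^n\omega$, which is a finitely generated graded $\mathcal{R}(I)$-module, and $\mathcal{N}$ will be $M$ regarded as an $\mathcal{R}(I)$-module concentrated in degree $0$ (i.e., via the augmentation $\mathcal{R}(I) \to R$).

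First I would fix a free resolution $F_\bullet \to M$ of $M$ over $R$ by finitely generated free $R$-modules; since $M$ is finitely generated and $R$ is Noetherian this exists, and in each homological degree $F_j$ is a finite free $R$-module. Applying $- \otimes_R \mathcal{R}(I)$ (or better, base-changing along $R \to \mathcal{R}(I)$ and using that $\mathcal{R}(I)$ is flat over $R$ in each graded piece is not quite what I want — instead I view things directly) one obtains that $\Ext^i_R(M, I^n\omega)$ is the $i$-th cohomology of the complex $\Hom_R(F_\bullet, I^n\omega)$. Now the key observation is that $\bigoplus_{n \geq 0}\Hom_R(F_\bullet, I^n\omega) = \Hom_R(F_\bullet, \mathcal{R}(I,\omega))$ carries a natural structure of a complex of finitely generated graded $\mathcal{R}(I)$-modules: each term $\Hom_R(F_j, \mathcal{R}(I,\omega)) \cong \mathcal{R}(I,\omega)^{\beta_j}$ is finitely generated graded over $\mathcal{R}(I)$ because $F_j$ is finite free over $R$, and the differentials, being induced by the $R$-linear maps in $F_\bullet$, are $\mathcal{R}(I)$-linear and homogeneous of degree $0$.

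Next, since $\mathcal{R}(I)$ is Noetherian and each term of this complex is a finitely generated $\mathcal{R}(I)$-module, every cohomology module $E^i_I(M) = H^i\big(\Hom_R(F_\bullet, \mathcal{R}(I,\omega))\big)$ is a subquotient of a finitely generated module over a Noetherian ring, hence itself finitely generated as a graded $\mathcal{R}(I)$-module. The one point that needs a careful word is the identification $H^i\big(\bigoplus_n \Hom_R(F_\bullet, I^n\omega)\big) = \bigoplus_n H^i\big(\Hom_R(F_\bullet, I^n\omega)\big)$, i.e., that cohomology commutes with the direct sum; this is immediate because direct sums are exact and because the $\mathcal{R}(I)$-grading is compatible with the complex structure, so the graded piece in degree $n$ of $E^i_I(M)$ is exactly $\Ext^i_R(M, I^n\omega)$.

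The main obstacle — really the only subtlety — is making sure the $\mathcal{R}(I)$-module structure on $\bigoplus_n \Hom_R(F_\bullet, I^n\omega)$ is set up correctly and is compatible with differentials, so that one genuinely has a complex of graded $\mathcal{R}(I)$-modules rather than merely a graded vector space; once that is in place the finiteness is a formal consequence of Noetherianity of $\mathcal{R}(I)$. I would also remark that this lemma is exactly what licenses the polynomial behavior invoked in Remarks~\ref{introfrems} and \ref{hilcof}: applying Remarks~\ref{hilcof}(c) to $E = E^i_I(M)$ for $i \geq 1$, or an analogous argument combined with the stable filtration $\mathcal{F}$ for $i = 0$, yields that $n \mapsto \ell(\Ext^i_R(M, \omega/I^{n+1}\omega))$ is eventually polynomial, recovering Theodorescu's theorem in this special setting.
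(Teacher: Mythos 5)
Your proposal is correct and is essentially the paper's own argument: take a finite free resolution $\mathbb F_\bullet$ of $M$, observe that $\Hom_R(\mathbb F_\bullet,\bigoplus_{n\geq 0}I^n\omega)$ is a complex of finitely generated graded $\mathcal R(I)$-modules with degree-zero $\mathcal R(I)$-linear differentials, and identify $E^i_I(M)$ with its $i$-th cohomology (using that $\Hom$ out of a finite free module commutes with direct sums), whence finiteness follows from Noetherianity of $\mathcal R(I)$. The only difference is cosmetic: your opening framing via Ext over the graded ring is not actually used, and the argument you carry out coincides with the paper's.
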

\begin{proof} Consider a minimal free resolution of $M$
\[
\mathbb F_{\bullet} \quad :\quad ...\rightarrow F_2\rightarrow F_1\rightarrow F_0\rightarrow  0.
\]
 Applying the functor $\Hom_R(-,\bigoplus_{n\geqslant 0}I^n\omega)$ to this sequence,  we get the complex $\Hom_R(\mathbb F_{\bullet},\bigoplus_{n\geqslant 0}I^n\omega)$ of finitely generated graded ${\mathcal R}(I)$-modules. We now have
\begin{eqnarray*}
\bigoplus_{n\geqslant 0}\Ext_R^i(M,I^n\omega)&\simeq & \Ext_R^i(M,\bigoplus_{n\geqslant 0}I^n\omega)\\
&=& \mathcal H^i\left( \Hom_R(\mathbb F_{\bullet},\bigoplus_{n\geqslant 0}I^n\omega)\right).
\end{eqnarray*}
Hence for each $i\geqslant 0$, $\bigoplus_{n\geqslant 0}\Ext_R^i(M,I^n\omega)$ is finitely generated graded ${\mathcal R}(I)$-module.
\end{proof}

\begin{lemma}\label{hiltfgext}
Let $M$ be an MCM $R$-module. Then the following hold :
\begin{itemize}
\item [$($\rm a$)$] For $i\geqslant 1$, $D^i_I(M) \cong E^{i+1}_I(M)$. So $D^i_I(M)$ is a finitely generated graded ${\mathcal R}(I)$-module.
\item [$($\rm b$)$] For all $i\geqslant 1,$ the function $n\mapsto \ell\left( \Ext_R^i(M,I^n\omega)\right) $ is a polynomial function of degree $\leqslant d-1$.
\end{itemize}
\end{lemma}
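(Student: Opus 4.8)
The plan is to read everything off the long exact sequence obtained by applying $\Hom_R(M,-)$ to the short exact sequences
\[
0 \longrightarrow I^n\omega \longrightarrow \omega \longrightarrow \omega/I^n\omega \longrightarrow 0, \qquad n \geq 0,
\]
together with the vanishing $\Ext^i_R(M,\omega) = 0$ for all $i \geq 1$. This vanishing holds because $M$ is MCM and $\omega$ is a canonical module, so $\Ext^i_R(M,\omega) = 0$ for $i > d - \depth M = 0$ (see \cite[Theorem 3.3.10]{BH}).

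For part (a), I would fix $n$ and consider the segment
\[
\Ext^i_R(M,\omega) \longrightarrow \Ext^i_R\left(M,\frac{\omega}{I^n\omega}\right) \xrightarrow{\ \delta\ } \Ext^{i+1}_R(M,I^n\omega) \longrightarrow \Ext^{i+1}_R(M,\omega)
\]
of the long exact sequence. For $i\geq 1$ the two outer terms vanish, so the connecting map $\delta$ is an isomorphism $\Ext^i_R(M,\omega/I^n\omega)\cong \Ext^{i+1}_R(M,I^n\omega)$. To promote these into an isomorphism of graded $\mathcal R(I)$-modules one checks that for $a\in I$ multiplication by $a$ gives a morphism from the short exact sequence for $n$ to the one for $n+1$ which is the identity on the middle term $\omega$; by naturality of the connecting homomorphism the maps $\delta$ are then compatible with the degree-one multiplication maps defining the $\mathcal R(I)$-structures on $D^i_I(M)$ and on $E^{i+1}_I(M)$. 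Thus $D^i_I(M)\cong E^{i+1}_I(M)$ as graded $\mathcal R(I)$-modules, and Lemma \ref{hiltfgext1} shows the latter is a finitely generated graded $\mathcal R(I)$-module.

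For part (b), by Lemma \ref{hiltfgext1} the module $E^i_I(M)$ is a finitely generated graded $\mathcal R(I)$-module, so by Remark \ref{hilcof}(c) it suffices to show that each graded component $\Ext^i_R(M,I^n\omega)$ has finite length when $i\geq 1$. Since $I$ is $\m$-primary, $\omega/I^n\omega$ has finite length, hence so does $\Ext^j_R(M,\omega/I^n\omega)$ for every $j\geq 0$. For $i\geq 2$, part (a) applied with $i-1$ in place of $i$ identifies $\Ext^i_R(M,I^n\omega)$ with $\Ext^{i-1}_R(M,\omega/I^n\omega)$; for $i=1$ the long exact sequence together with $\Ext^1_R(M,\omega)=0$ exhibits $\Ext^1_R(M,I^n\omega)$ as a quotient of $\Hom_R(M,\omega/I^n\omega)$. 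In either case the component has finite length, and Remark \ref{hilcof}(c) gives the conclusion.

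The only delicate point I anticipate is the compatibility of the connecting isomorphisms with the Rees-algebra module structure in part (a); once that naturality is pinned down, the remainder is routine bookkeeping with the long exact sequence and with lengths.
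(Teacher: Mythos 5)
Your argument is correct and is essentially the paper's proof: the same short exact sequence $0 \to I^n\omega \to \omega \to \omega/I^n\omega \to 0$, the vanishing $\Ext^i_R(M,\omega)=0$ for $i\geq 1$ since $M$ is MCM, the resulting connecting isomorphisms for (a), and for (b) the finite length of the components (via (a) for $i\geq 2$, and via the surjection from $\Hom_R(M,\omega/I^n\omega)$ for $i=1$) combined with Remark \ref{hilcof}(c); indeed the paper does not even spell out the graded compatibility you verify. One small correction in that verification: the morphism from the sequence for $n$ to the one for $n+1$ realizing the $\mathcal R(I)$-action of $at$ is multiplication by $a$ on \emph{all three} terms, not the identity on the middle term $\omega$ (with the identity there the left-hand square cannot commute, since that would force $ax=x$ on $I^n\omega$); with this fix, naturality of the connecting homomorphism gives exactly the compatibility you want, and the rest of your argument stands as written.
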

\begin{proof} {\bf (a):} Consider the exact sequence
\[
0\rightarrow I^n\omega \rightarrow \omega \rightarrow  \frac{\omega}{I^n\omega} \rightarrow 0.
\]
Since  $M$ is $MCM$, $\Ext_R^i(M,\omega)=0$ for all  $i\geqslant 1$. So applying the functor $\Hom_R(M,-)$ to the above exact sequence we get
\begin{eqnarray*}
\Ext_R^i(M,\omega/{I^n\omega}) &\simeq & \Ext_R^{i+1}(M,I^n\omega)\quad\mbox{for all}\;i\geqslant 1.
\end{eqnarray*}
From Lemma \ref{hiltfgext1} it follows that $D^i_I(M)$ is finitely generated graded ${\mathcal R}(I)$-module  and  $ D^i_I(M)\cong E^{i+1}_I(M) $  for all $i\geqslant 1$.

\noindent {\bf (b):}  Since $I$ is $\m$-primary and by using part (a), we get that for all $i\geqslant 2$, $\ell \left(\Ext_R^i(M,I^n\omega) \right) < \infty$ for all $n$. Therefore by \ref{hilcof}(c) the function \\ $n\longmapsto \ell\left( \Ext_R^i(M,I^n\omega)\right)$ is a polynomial function of degree at most $l(I)-1=d-1$. For $i=1$, consider the exact sequence
\[
0 \rightarrow I^n\omega \rightarrow \omega \rightarrow  \frac{\omega}{I^n\omega} \rightarrow 0.
\]
Applying the functor $\Hom_R(M,-)$ we get
\[
 0 \rightarrow \frac{M^{\dagger}}{M^{\dagger}_n}\rightarrow \Hom_R(M,\frac{\omega}{I^n\omega}) \rightarrow \Ext_R^1(M,I^n\omega) \rightarrow 0.
\]
It follows that $\ell\left( \Ext_R^1(M,I^n\omega)\right)<\infty $ for all $n\geqslant 1$. Using \ref{hilcof}(c) we get that for all $n\gg 0$, the function $ n\longmapsto \ell\left( \Ext_R^1(M,I^n\omega)\right)$ is given a polynomial of degree at most $d-1$.
\end{proof}

\indent Another useful observation is recorded in the following proposition.

\begin{proposition}\label{hiltcoef}
The function $n\longmapsto \ell \left( \Hom_R(M,{\omega_R}/{I^{n+1} \omega_R})\right) $ is of polynomial type and the dual Hilbert coefficient $c_0^I(M)$ is same as the multiplicity of $M$ with respect to $I$.
\end{proposition}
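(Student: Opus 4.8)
The plan is to leverage the exact sequence already extracted in the proof of Lemma~\ref{hiltfgext}(b). For every $n \geq 1$ we have the short exact sequence
\[
0 \rightarrow \frac{M^{\dagger}}{M^{\dagger}_{n+1}} \rightarrow \Hom_R\!\left(M, \frac{\omega}{I^{n+1}\omega}\right) \rightarrow \Ext_R^1(M, I^{n+1}\omega) \rightarrow 0,
\]
which gives
\[
\ell\!\left( \Hom_R\!\left(M, \frac{\omega}{I^{n+1}\omega}\right)\right) = \ell\!\left( \frac{M^{\dagger}}{M^{\dagger}_{n+1}}\right) + \ell\!\left( \Ext_R^1(M, I^{n+1}\omega)\right).
\]
By Remarks~\ref{hilcof}(a), the first summand on the right is, for $n \gg 0$, a polynomial in $n$ of degree $d = \dim M^{\dagger}$ whose normalized leading coefficient is $e_0^I(M^{\dagger})$; by Remarks~\ref{hilcof}(b), this equals $e_0^I(M)$. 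By Lemma~\ref{hiltfgext}(b) (the case $i=1$), the second summand is eventually a polynomial in $n$ of degree at most $d-1$. Adding the two, the left-hand function is eventually polynomial of degree $d$ with normalized leading coefficient $e_0^I(M)$.

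It then remains only to reconcile this with the definition of $c_0^I(M)$. By definition $\varepsilon^0_M(I,t) = \sum_{i=0}^d (-1)^i c_i^I(M)\binom{t+d-i}{d-i}$, so the normalized leading coefficient of $\varepsilon^0_M(I,t)$ — i.e.\ the coefficient of $t^d/d!$ — is exactly $c_0^I(M)$. Matching this against $e_0^I(M)$ obtained above yields $c_0^I(M) = e_0^I(M)$.

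The only point requiring a little care is the identity $e_0^I(M^{\dagger}) = e_0^I(M)$ invoked from Remarks~\ref{hilcof}(b); since both $M$ and $M^{\dagger} = \Hom_R(M,\omega)$ are MCM and $\Hom_R(-,\omega)$ is an exact duality on MCM modules preserving lengths of finite-length pieces, one can see that the Hilbert-Samuel polynomials with respect to $I$ share the same leading term — for instance by computing $e_0^I$ via a minimal reduction $\mathfrak{q}$ of $I$ and using $\ell(M/\mathfrak{q}M) \geq e_0^I(M) = e_0^{\mathfrak{q}}(M)$ together with the equality $\ell(M^{\dagger}/\mathfrak{q}M^{\dagger}) = \ell(\Ext_R^d(M/\mathfrak{q}M,\omega))$ and the self-duality of finite-length modules. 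This is the one spot where I expect the argument to need more than bookkeeping, but it is standard; the rest of the proof is just assembling already-proved facts.
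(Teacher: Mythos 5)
Your proof is correct and follows essentially the same route as the paper: the short exact sequence $0 \rightarrow M^{\dagger}/M^{\dagger}_{n} \rightarrow \Hom_R(M,\omega/I^{n}\omega) \rightarrow \Ext^1_R(M,I^{n}\omega) \rightarrow 0$, combined with Remarks~\ref{hilcof}(a),(b) and Lemma~\ref{hiltfgext}(b), is exactly the paper's argument. The identity $e_0^I(M^{\dagger})=e_0^I(M)$ is simply invoked there as Remark~\ref{hilcof}(b), so your extra justification of it, while welcome, is not a departure from the paper's proof.
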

\begin{proof}
Consider the exact sequence
\[
0 \rightarrow I^n\omega \rightarrow \omega \rightarrow  \frac{\omega}{I^n\omega} \rightarrow 0.
\]
Applying the functor $\Hom_R(M,-)$ we get
\[
 0 \rightarrow \frac{M^{\dagger}}{M^{\dagger}_n}\rightarrow \Hom_R(M,\frac{\omega}{I^n\omega}) \rightarrow \Ext_R^1(M,I^n\omega) \rightarrow 0.
\]
Therefore it follows that
\[
 \ell\left( \Hom_R(M,\frac{\omega}{I^n\omega})\right) = \ell\left(\frac{M^{\dagger}}{M^{\dagger}_n} \right) +\ell\left(\Ext_R^1(M,I^n\omega)  \right).
\]
By Lemma \ref{hiltfgext}(b), we have that the function $ n\longmapsto \ell\left( \Ext_R^1(M,I^n\omega)\right)$ is given a polynomial function of degree at most $d-1$. Also by Remark \ref{hilcof}(a), the function $n\longmapsto \ell\left( M^{\dagger}/{M^{\dagger}_n}\right)$ is a polynomial type of degree $d$. So the function $ n\longmapsto \ell \left( \Hom_R(M,{\omega_R}/{I^{n+1} \omega_R})\right)$ is given a polynomial $\varepsilon^0_M(I,t)$ and
the leading coefficient of this polynomial is same as that of $n\longmapsto \ell\left( M^{\dagger}/{M^{\dagger}_n}\right)$. Therefore  $c_I^0(M)=e_0^I(M^{\dagger})$. Remark \ref{hilcof}(b) gives that $c_0^I(M)=e_0^I(M)$.\end{proof}

We end this section by computing the dual Hilbert-Samuel function \wrt \ a parameter ideal $J$. The reasons for this computations are the following
\begin{enumerate}
  \item If $J$ is a minimal reduction of $I$ \wrt \ $\m$ then $J$ is a parameter ideal. We also have
  \[
  c^I_0(M) = e_0^I(M) = e_0^J(M) = c^J_0(M).
  \]
  Here the first and the third equality is due to Proposition \ref{hiltcoef} while the second equality is a basic property of  reductions.
  \item The computation will also prove that
  \[
  \Ext^{1}_{R}\left(M, \frac{\omega}{J^{n+1}\omega}\right) = 0 \quad \text{for all} \ n \geq 0.
  \]
  This assertion proves that we cannot hope that Theorem 4.2 to work for all $\m$-primary ideals.
  \item We need to compute the Dual Hilbert-Samuel function in this essentially simplest case of an $\m$-primary ideal.
\end{enumerate}
\begin{example}
Let $M$ be a MCM module. Let $J = (x_1,\ldots,x_d)$ be a parameter ideal. Then
\begin{enumerate}[\rm (a)]
  \item
$ \displaystyle{  \ve_M^0(J,n) =  \ell\left( \Hom_R(M, \frac{\omega}{J^{n+1}\omega} \right) =  \ell(M/JM)\binom{n+d}{d}    }$
  \item
$\displaystyle{  \ve_M^1(J,n) =  \ell\left( \Ext^1_R(M, \frac{\omega}{J^{n+1}\omega} \right) =  0 \quad \text{for all} \ n \geq 0.   } $
\end{enumerate}
\end{example}
\begin{proof}
Let $\delta(n) = \mu(J^n) = \binom{n+d-1}{d-1}$. Since $x_1,\ldots, x_d$ is a $R$-regular  sequence, it is also $\omega$-regular. So we get
\[
\frac{J^n\omega }{J^{n+1} \omega} = \left(\frac{\omega}{J\omega}\right)^{\delta(n)}.
\]
Set $S = R/J$. Notice $\omega_S = \omega/J\omega$. We will also use the fact that for $i \geq 0$,
\begin{align*}
\ell\left(\Ext^{i}_{R}(M, \omega/J\omega) \right) &= \ell\left(\Ext^{i}_{S}(M/JM, \omega_S)\right), \quad \text{cf. \cite[\S 18, Lemma 2]{Mat} }\\
 &= \begin{cases} \ell(M/JM), &\text{if $i = 0$;} \\ 0, &\text{if $i \geq 1$.}                       \end{cases} \tag{*}
\end{align*}
We prove assertions (a), (b) by induction on $n$.
For $n = 0 $ the result follows from (*).
For $n \geq 0$ we use the exact sequence
\[
0 \longrightarrow \frac{J^n\omega}{J^{n+1}\omega} \longrightarrow \frac{\omega}{J^{n+1}\omega} \longrightarrow \frac{\omega}{J^{n}\omega} \longrightarrow 0.
\]
We now apply the functor $\Hom_R(M,-)$. The corresponding long exact sequence and (*) gives the result.
\end{proof}

\section{Behavior with respect to a Superficial elements}
\noindent In this section we discuss the behavior of $\varepsilon^0_M(\omega,t)$ with respect to a superficial element. To ensure good behavior, see \ref{behevsuper},   it is important to choose
superficial elements with some care, see \ref{supassump}.

Let us first recall some basic definitions.
\s An element $x\in I$ is called $M$-${\it superficial}$ with respect to $I$ if there exists an integer
$c\ge 0$ such that
$$(I^{n+1}M:_M x)\cap I^cM=I^nM\quad \mbox{for all}\; n\ge c.$$
Also recall that if $E=\bigoplus_{n\geqslant 0}E_n$ is finitely generated graded ${\mathcal R}(I)$-module then an element $xt\in {\mathcal R}(I)_1$ is called $E$-${\it filter\; regular}$ if
\begin{eqnarray*}
(0:_E x)_n &=& 0\quad \mbox{for all}\;n\gg 0. \\
\mbox{Equivalently }\quad \alpha_x \;:\; E_n  &\rightarrow & E_{n+1}\quad \mbox{are injective for}\;n\gg 0,
\end{eqnarray*}
where $\alpha_x$ is the  map induced by multiplication by $x$ (i.e., $\alpha_x(u)=xu$). Notice that if the residue field $R/{\m}$ is infinite then superficial and filter regular element exists (see \cite[18.3.10]{BS}).

\s \label{supassump}{\bf Choosing a superficial element:}
Since by Lemma \ref{hiltfgext},  $D^i_I(M)$ are finitely generated ${\mathcal R}(I)$-modules for $i\geqslant 1$, we can choose an element $x\in I\setminus I^2$ such that
\begin{itemize}
\item [$($\rm a$)$] $x$ is $I$-superficial element with respect to $R\oplus\omega$.
\item [$($\rm b$)$] $xt\in {\mathcal R}(I)_1$ is $D^1_I(M)\oplus D^2_I(M)$-filter regular.
\end{itemize}
Clearly we have $x$ is $R\oplus M$-regular. Set $S=R/xR$, $J=I/{(x)}$ and $N=M/{xM}$. Notice that $\omega_S=\omega/{x\omega},$ the canonical module of $S$.

\begin{proposition}\label{behevsuper}
 Let $x\in I\setminus I^2$ be an element  satisfying  \ref{supassump}. Then
\begin{itemize}
\item [$($\rm a$)$] $\varepsilon^0_M(I,n) - \varepsilon^0_M(I,n-1)= \varepsilon^0_{N}(J,n)$\quad for $n \gg 0$.
\item [$($\rm b$)$]  $\varepsilon^1_M(I,n) - \varepsilon^1_M(I,n-1)= \varepsilon^1_{N}(J,n)$\quad for $n \gg 0$.
\end{itemize}
\end{proposition}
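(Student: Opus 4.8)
The plan is to reduce both statements to the study of the modules $D^i_I(M)$ as finitely generated graded $\mathcal{R}(I)$-modules, and to use the filter-regularity of $xt$ on $D^1_I(M)\oplus D^2_I(M)$ together with the superficiality of $x$ on $R\oplus\omega$. The key object to relate the "$M$ side" and the "$N=M/xM$ side" is the short exact sequence $0\to R\xrightarrow{x} R\to S\to 0$ (using that $x$ is $R$-regular, hence $\omega$-regular), which upon applying $\Hom_R(M,-)$ to $\omega/I^{n+1}\omega$ and using base change $\Ext^i_R(M,-)\otimes$-arguments, should yield a four-term or long exact sequence connecting $\Ext^i_R(M,\omega/I^{n+1}\omega)$, $\Ext^i_R(M,\omega/x\omega+I^{n+1}\omega)$ and $\Ext^{i+1}_R(M,\omega/I^{n+1}\omega)$. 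I would first set up the comparison between $\omega_S/J^{n+1}\omega_S$ and $(\omega/I^{n+1}\omega)/x(\omega/I^{n+1}\omega)$; since $x$ is $\omega$-superficial with respect to $I$ and $\m$-primary, these agree up to modules of finite length concentrated in bounded degree, so the associated Ext-lengths differ by something eventually polynomial of lower degree — in fact eventually constant or zero in the relevant ranges.

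The core step is the following. Apply the multiplication-by-$x$ map to the graded module $D^0_I(M)=\bigoplus_n \Ext^0_R(M,\omega/I^{n}\omega)$ and to $D^1_I(M)$. Because $xt$ is $D^1_I(M)\oplus D^2_I(M)$-filter regular, the maps $\alpha_x$ on $D^1_I(M)_n$ and $D^2_I(M)_n$ are injective for $n\gg 0$; and by Lemma \ref{hiltfgext}(a), $D^i_I(M)\cong E^{i+1}_I(M)$ for $i\geq 1$, so the cokernels of these $\alpha_x$ maps are themselves the graded pieces of $D^i$-type modules for the ring $S$ and ideal $J$ — this is where the identification $\ve^i_N(J,n)$ enters. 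Concretely, I expect an exact sequence of graded $\mathcal{R}(I)$-modules of the shape
\[
0 \to D^0_I(M)(-1) \xrightarrow{\alpha_x} D^0_I(M) \to D^0_J(N) \to D^1_I(M)(-1) \xrightarrow{\alpha_x} D^1_I(M) \to D^1_J(N) \to D^2_I(M)(-1)\xrightarrow{\alpha_x}\cdots
\]
(perhaps only valid in large degrees, or up to finite-length error terms). Taking lengths in degree $n$ for $n\gg 0$: filter-regularity kills the $(0:x)$ contributions in the $D^1,D^2$ spots, so in each relevant degree the alternating-sum/exactness collapses to precisely
\[
\ell(D^0_I(M)_{n+1}) - \ell(D^0_I(M)_n) = \ell(D^0_J(N)_{n+1}), \qquad
\ell(D^1_I(M)_{n+1}) - \ell(D^1_I(M)_n) = \ell(D^1_J(N)_{n+1}),
\]
which, after shifting indices to match the $\omega/I^{n+1}\omega$ convention used in defining $\ve^i_M(I,n)$, are exactly statements (a) and (b).

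For the bookkeeping: part (a) also follows more directly from Proposition \ref{hiltcoef} combined with the classical superficial-element behavior of the Hilbert–Samuel polynomial of $M^\dagger=\Hom_R(M,\omega)$ (an MCM module), since $\ve^0_M(I,n)$ and the Hilbert–Samuel polynomial of $M^\dagger$ with respect to $I$ have the same leading term, and more precisely their difference is governed by the eventually-polynomial function $n\mapsto\ell(\Ext^1_R(M,I^n\omega))=\ell(D^1_I(M)_n)$ of degree $\leq d-1$; so (a) reduces to (b) plus the standard fact that $e^I(M^\dagger/xM^\dagger)$-type first differences recover $e^J(N^\dagger)$. Thus I would actually prove (b) first and deduce (a). The main obstacle I anticipate is the bounded-degree/finite-length discrepancy between $N=M/xM$ with ideal $J=I/(x)$ over $S$ and the "reduction mod $x$" of the graded modules $D^i_I(M)$: one must check that $\Ext^i_S(N,\omega_S/J^{n+1}\omega_S)$ really is computed by $\coker(\alpha_x)$ in degree $n$ for $n\gg 0$, which requires the superficiality hypothesis (a) to control $I^{n+1}\omega\cap x\omega = xI^n\omega$ for $n\gg 0$ (Artin–Rees / superficiality) and the filter-regularity hypothesis (b) to control the Ext-level kernels; reconciling these two sources of error and confirming they vanish in the degrees that matter is the delicate point.
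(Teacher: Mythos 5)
Your proposal follows essentially the same route as the paper: the multiplication-by-$x$ sequence $0\to\omega/I^n\omega\xrightarrow{\alpha_x^n}\omega/I^{n+1}\omega\to\omega_S/J^{n+1}\omega_S\to 0$ (valid for $n\gg 0$ by $\omega$-superficiality and $\omega$-regularity of $x$), the long exact sequence obtained by applying $\Hom_R(M,-)$, and the filter-regularity of $xt$ on $D^1_I(M)\oplus D^2_I(M)$ to split it into short exact sequences at the $\Hom$ and $\Ext^1$ spots in degrees $n\gg 0$, which gives (a) and (b) simultaneously. The point you flag as delicate in fact dissolves: $\omega_S/J^{n+1}\omega_S$ is literally equal to $\omega/(x,I^{n+1})\omega=\coker(\alpha_x^n)$ (no finite-length discrepancy to control), and the identification $\Ext^i_R(M,\omega_S/J^{n+1}\omega_S)\cong\Ext^i_S(N,\omega_S/J^{n+1}\omega_S)$ is the standard change-of-rings lemma \cite[\S 18, Lemma 2]{Mat}, exactly as the paper invokes it.
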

\begin{proof} Consider the exact sequence 	
\begin{eqnarray*}
0\rightarrow \frac{I^{n+1}\omega: x}{I^n\omega}\rightarrow \frac{\omega}{I^n \omega} &\xrightarrow{\alpha_x^n} &\frac{\omega}{I^{n+1} \omega} \rightarrow \frac{\omega}{(x,I^{n+1}) \omega} \rightarrow 0,\\
\mbox{such that}\quad {\alpha_x^n}(u+I^n\omega)&=& xu+I^{n+1}\omega,\quad\forall \;u\in \omega.
\end{eqnarray*}
As $x$ is $\omega$-superficial and $\omega$-regular, we have $I^{n+1}\omega: x=I^n\omega$ for $n\gg 0$. Set $J=I/(x)$. For $n\gg 0$, one can thus write above exact sequence as follows
\begin{equation*}
0\rightarrow \frac{\omega}{I^n \omega} \xrightarrow{\alpha_x^n} \frac{\omega}{I^{n+1} \omega} \rightarrow \frac{\omega_S}{J^{n+1}\omega_S} \rightarrow 0,
\end{equation*}
where $\omega_S=\omega_R/{x\omega_R}$, the canonical module of $S$. Now applying the functor $\Hom_R(M,-)$ we get a long exact sequence
\begin{eqnarray*}
0\rightarrow \Hom_R(M,\frac{\omega}{I^n \omega}) \rightarrow & \Hom_R(M,\frac{\omega}{I^{n+1} \omega}) &\rightarrow \Hom_R(M,\frac{\omega_{S}}{J^{n+1}\omega_{S}})\\
\rightarrow \Ext^1_R(M,\frac{\omega}{I^n \omega}) \rightarrow & \Ext^1_R(M,\frac{\omega}{I^{n+1} \omega}) &\rightarrow \cdots
\end{eqnarray*}
By our hypotheses on $x$, the maps
\begin{eqnarray*}
\Ext^1_R(M,\alpha_x^n)\colon \Ext^1_R(M,\frac{\omega}{I^n \omega}) &\rightarrow & \Ext^1_R(M,\frac{\omega}{I^{n+1} \omega})\\
\Ext^2_R(M,\alpha_x^n)\colon \Ext^2_R(M,\frac{\omega}{I^n \omega}) &\rightarrow & \Ext^2_R(M,\frac{\omega}{I^{n+1} \omega})
\end{eqnarray*}
are injective for all $n\gg 0$. Therefore for all $n\gg 0$, we get the exact sequences
\begin{eqnarray*}
0\rightarrow  \Hom_R(M,\frac{\omega}{I^n \omega}) \rightarrow  &\Hom_R(M,\frac{\omega}{I^{n+1} \omega}) &\rightarrow
 \Hom_R(M,\frac{\omega_{S}}{J^{n+1}\omega_{S}}) \rightarrow  0\\
0\rightarrow \Ext^1_R(M,\frac{\omega}{I^n \omega}) \rightarrow   &\Ext^1_R(M,\frac{\omega}{I^{n+1} \omega})& \rightarrow \Ext^1_R(M,\frac{\omega_{S}}{J^{n+1}\omega_{S}}) \rightarrow 0.
\end{eqnarray*}
Using \cite[Lemma 18.2]{Mat}, we get
\begin{eqnarray*}
 \Ext^i_R(M,\frac{\omega_{S}}{J^{n+1}\omega_{S}}) &\cong & \Ext^i_{S}(N,\frac{\omega_{S}}{J^{n+1}\omega_{S}})\quad\mbox{for all }\;n\geqslant 0.
\end{eqnarray*}
The result follows.
\end{proof}

The following is a very useful consequence to above proposition.

\begin{corollary}\label{behrem}
Let $x\in I\setminus I^2$ be an element  satisfying  \ref{supassump}. Then
\[
c_i^J(N)=c_i^I(M)\quad \mbox{for}\;\;i=0,1,2,...,d-1.
\]
\end{corollary}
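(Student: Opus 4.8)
The plan is to obtain this formally from Proposition \ref{behevsuper}(a) by taking first differences of the dual Hilbert-Samuel polynomial. Recall from the introduction that
\[
\varepsilon^0_M(I,t)=\sum_{i=0}^{d}(-1)^i c_i^I(M)\binom{t+d-i}{d-i}.
\]
Since $x$ is $M$-regular and $M$ is an MCM $R$-module, $N=M/xM$ is an MCM module over the $(d-1)$-dimensional \CM\ local ring $S=R/xR$, which by \ref{supassump} has canonical module $\omega_S=\omega/x\omega$. Hence $\varepsilon^0_{N}(J,t)$ has degree $d-1$ and
\[
\varepsilon^0_{N}(J,t)=\sum_{i=0}^{d-1}(-1)^i c_i^J(N)\binom{t+d-1-i}{d-1-i}.
\]

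First I would compute $\varepsilon^0_M(I,t)-\varepsilon^0_M(I,t-1)$ as a polynomial in $t$. Using the identity $\binom{a}{b}-\binom{a-1}{b}=\binom{a-1}{b-1}$ with $a=t+d-i$ and $b=d-i$ for $0\le i\le d-1$, and observing that the $i=d$ term $\binom{t}{0}$ cancels in the difference, one gets
\[
\varepsilon^0_M(I,t)-\varepsilon^0_M(I,t-1)=\sum_{i=0}^{d-1}(-1)^i c_i^I(M)\binom{t+d-1-i}{d-1-i}.
\]
On the other hand, Proposition \ref{behevsuper}(a) says that the integer-valued function $n\mapsto\varepsilon^0_M(I,n)-\varepsilon^0_M(I,n-1)$ coincides with $\varepsilon^0_{N}(J,n)$ for all $n\gg 0$; since two polynomials agreeing at infinitely many integers are equal, the polynomial just computed is identically $\varepsilon^0_{N}(J,t)$.

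It then remains to compare coefficients. The polynomials $\binom{t+d-1-i}{d-1-i}$ for $i=0,\ldots,d-1$ have pairwise distinct degrees $d-1,d-2,\ldots,0$, so they form a basis of the space of polynomials of degree $\le d-1$; equating the two expressions for $\varepsilon^0_M(I,t)-\varepsilon^0_M(I,t-1)$ coefficientwise yields $c_i^I(M)=c_i^J(N)$ for $i=0,\ldots,d-1$, as claimed. There is no real obstacle here beyond this bookkeeping — the one conceptual point is that taking a first difference annihilates the top binomial term, which is exactly why the statement stops at $i=d-1$ and asserts nothing about $c_d$ (just as the constant term of an ordinary Hilbert-Samuel polynomial is not preserved under reduction modulo a superficial element).
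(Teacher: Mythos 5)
Your proof is correct and follows essentially the same route as the paper: the paper likewise deduces the corollary from Proposition \ref{behevsuper}(a) together with the binomial identity $\binom{n+d-i-1}{d-i-1}=\binom{n+d-i}{d-i}-\binom{n+d-i-1}{d-i}$, and your write-up simply makes explicit the cancellation of the $i=d$ term and the coefficient comparison that the paper leaves to the reader.
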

\begin{proof}
It follows easily from  Proposition \ref{behevsuper}(a) and  the binomial identity
\[
\binom{n + d - i-1}{d-i-1} = \binom{n + d - i}{d-i}-\binom{n + d - i-1}{d-i}.
\]
\end{proof}

\section{The function $n\longmapsto \ell\left(\Ext^1_R(M,\omega/{{\m}^{n+1}\omega})\right)$}
\noindent In this section our goal is to show that for a non-free $MCM$ $R$-module $M$ of dimension $d\geqslant 1$, the degree of polynomial $\varepsilon_M^1(\m,t)$ with respect to maximal ideal $\m$ is exactly $d-1$. This need not be true for all $\m$-primary ideals in general (see Example 2.6.\\
 Recall the $i$-th betti number $\beta_i$ of $M$ (see \cite[1.3.1]{BH}) is given by
\[
\beta_i=\dim_k \Ext^i_R(M,k),\quad\mbox{where}\;\;k=R/{\m}.
\]
Note that $\beta_0=\mu(M)$, the minimal number of generators of $M$.

\indent The following is the dual version  of Proposition 17  in \cite{TJP2}.

\begin{proposition}\label{polydeg}
Let \;$\dim M=d\geqslant 1$. Set $L=\Syz^1_R(M)$, the first syzygy of $M$. Then
\begin{itemize}
\item [$($\rm a$)$] We have
$$\sum_{n\geqslant 0}\ell\left( \Ext^1_R(M,\omega/{I^{n+1}\omega})\right)t^n=\frac{f^I_M(t)-\mu(M)h^I_{\omega}(t)+f^I_L(t)}{(1-t)^{d+1}}.$$
\item [$($\rm b$)$] For $n\gg 0$ the function $n\longmapsto \ell \left( \Ext^1_R(M,\omega/{I^{n+1}\omega})\right)$ is given by a polynomial $\varepsilon^1_M(I,t)$ of the form
\[
\varepsilon^1_M(I,t)=\left( \mu(M)\cdot e_1^I(\omega)-c_1^I(M)-c_1^I(L))\right) \frac{t^{d-1}}{{(d-1)!}}+ \mbox{lower degree terms}.
\]
\item [$($\rm c$)$] $c_1^I(M)+c_1^I(L))\leqslant \mu(M)\cdot e_1^I(\omega).$
\end{itemize}
\end{proposition}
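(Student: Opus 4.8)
The argument rests on the minimal presentation
\[
0 \longrightarrow L \longrightarrow F \longrightarrow M \longrightarrow 0, \qquad F = R^{\mu(M)},\quad L = \Syz^1_R(M).
\]
Since $F$ is free and $M$ is MCM, the depth lemma gives $\depth L \geqslant \min\{\depth F,\ \depth M + 1\} = d$, while $\dim L \leqslant \dim F = d$; hence $L$ is again an MCM $R$-module, so $\varepsilon^0_L(I,t)$ and the dual Hilbert coefficients $c^I_i(L)$ of the Introduction make sense. Apply $\Hom_R\!\big(-,\,\omega/I^{n+1}\omega\big)$ to the presentation. As $\Ext^i_R(F,-)=0$ for $i\geqslant 1$, the long exact sequence reduces to
\[
0 \to \Hom_R\!\Big(M,\tfrac{\omega}{I^{n+1}\omega}\Big) \to \Big(\tfrac{\omega}{I^{n+1}\omega}\Big)^{\mu(M)} \to \Hom_R\!\Big(L,\tfrac{\omega}{I^{n+1}\omega}\Big) \to \Ext^1_R\!\Big(M,\tfrac{\omega}{I^{n+1}\omega}\Big) \to 0 .
\]
Every term has finite length because $I$ is $\m$-primary, so additivity of length gives, for all $n \geqslant 0$,
\begin{equation*}
\ell\Big(\Ext^1_R\big(M,\tfrac{\omega}{I^{n+1}\omega}\big)\Big) = \ell\Big(\Hom_R\big(M,\tfrac{\omega}{I^{n+1}\omega}\big)\Big) + \ell\Big(\Hom_R\big(L,\tfrac{\omega}{I^{n+1}\omega}\big)\Big) - \mu(M)\,\ell\Big(\tfrac{\omega}{I^{n+1}\omega}\Big). \tag{$\dagger$}
\end{equation*}

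For part (a): multiply $(\dagger)$ by $t^n$ and sum over $n\geqslant 0$. Writing the dual Hilbert-Samuel series of $M$ and of $L$, and the Hilbert-Samuel series of $\omega$, over the common denominator $(1-t)^{d+1}$ — so that their numerators are $f^I_M(t)$, $f^I_L(t)$ and $h^I_\omega(t)$ respectively — yields the stated identity of rational functions at once.

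For part (b): for $n\gg 0$ all three functions on the right of $(\dagger)$ are polynomials, so $(\dagger)$ becomes the polynomial identity $\varepsilon^1_M(I,n) = \varepsilon^0_M(I,n) + \varepsilon^0_L(I,n) - \mu(M)\,P^I_\omega(n)$, where $P^I_\omega$ is the Hilbert-Samuel polynomial of $\omega$. Expand the right-hand side in the basis $\binom{n+d-i}{d-i}$ via $\varepsilon^0_M(I,n) = \sum_{i=0}^d (-1)^i c^I_i(M)\binom{n+d-i}{d-i}$, the analogue for $L$, and $P^I_\omega(n) = \sum_{i=0}^d (-1)^i e^I_i(\omega)\binom{n+d-i}{d-i}$. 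By Remark \ref{introfrems}(b) the left-hand side has degree $\leqslant d-1$, so the coefficient of $\binom{n+d}{d}$, namely $c^I_0(M)+c^I_0(L)-\mu(M)e^I_0(\omega)$, must vanish — consistent with Proposition \ref{hiltcoef}, additivity of multiplicity along the presentation, and the equality $e^I_0(\omega)=e^I_0(R)$ (apply Remark \ref{hilcof}(b) to $\omega$, using $\Hom_R(\omega,\omega)\cong R$). The coefficient of $\binom{n+d-1}{d-1}$ is then $\mu(M)e^I_1(\omega)-c^I_1(M)-c^I_1(L)$; since $\binom{n+d-1}{d-1}=\frac{n^{d-1}}{(d-1)!}+(\text{lower order})$, this is precisely the coefficient of $\frac{t^{d-1}}{(d-1)!}$ in $\varepsilon^1_M(I,t)$. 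For part (c): the left-hand side of $(\dagger)$ is a nonnegative integer for every $n$, so $\varepsilon^1_M(I,n)\geqslant 0$ for $n\gg 0$; an eventually nonnegative polynomial has nonnegative leading coefficient (and if it is identically zero the claim is trivial). By (b) this coefficient equals $\mu(M)e^I_1(\omega)-c^I_1(M)-c^I_1(L)$, which is therefore $\geqslant 0$.

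\textbf{Main obstacle.} There is no genuine difficulty here: the argument is a short diagram chase followed by coefficient bookkeeping. The only points requiring care are checking that $L$ is MCM (so that the quantities $c^I_i(L)$ are defined) and the cancellation of the top-degree term in (b), where one must track the alternating signs in the binomial expansion correctly; invoking Remark \ref{introfrems}(b) for the degree bound is cleaner than verifying $e^I_0(\omega)=e^I_0(R)$ directly, though either route works.
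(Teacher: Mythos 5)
Your proof is correct and follows essentially the same route as the paper: the minimal presentation $0\to L\to R^{\mu(M)}\to M\to 0$, the four-term exact sequence obtained by applying $\Hom_R(-,\omega/I^{n+1}\omega)$, and additivity of length, followed by extraction of the first two coefficients. The only (immaterial) difference is bookkeeping: the paper cancels the top term via $p(1)=e_0^I(M)-\mu(M)e_0^I(\omega)+e_0^I(L)=0$ and computes $s(1)=-p'(1)$ on the series numerator, while you expand the eventual polynomials in the binomial basis and kill the top coefficient using the degree bound of Remark \ref{introfrems}(b) — both arguments are equivalent.
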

\begin{proof}
{\bf (a):} Consider the exact sequence
\begin{equation*}
0\longrightarrow L\longrightarrow R^{\mu(M)} \longrightarrow M \longrightarrow 0.\tag{i}
\end{equation*}
Applying $\Hom_R(-,\omega/{I^{n+1}\omega})$ to this, we get
\begin{eqnarray*}
0 \rightarrow \Hom_R(M,\frac{\omega}{I^{n+1}\omega}) &\rightarrow & \Hom_R( R^{\mu(M)},\frac{\omega}{I^{n+1}\omega}) \rightarrow \Hom_R(L,\frac{\omega}{I^{n+1}\omega}) \\
\rightarrow \Ext^1_R(M,\frac{\omega}{I^{n+1}\omega}) &\rightarrow & 0.
\end{eqnarray*}
This induces that
\begin{eqnarray*}
\sum_{n\geqslant 0}\ell\left( \Ext^1_R(M,\omega/{I^{n+1}\omega})\right)t^n &=& \frac{f^I_M(t)}{(1-t)^{d+1}}- \frac{\mu(M)\cdot h^I_{\omega}(t)}{(1-t)^{d+1}}+\frac{f^I_L(t)}{(1-t)^{d+1}}\\
&=& \frac{f^I_M(t)-\mu(M)\cdot h^I_{\omega}(t)+f^I_L(t)}{(1-t)^{d+1}}
\end{eqnarray*}
\noindent{\bf (b):} Now we set $p(t)=f^I_M(t)-\mu(M)\cdot h^I_{\omega}(t)+f^I_L(t)$. From Proposition \ref{hiltcoef},
it follows that
\[
p(1)=e_0^I(M)-\mu(M)\cdot e_0^I(\omega)+e_0^I(L)=0,\quad \mbox{by (i)}.
\]
 We thus write $p(t)=(1-t)s(t)$. This gives that
\[
\sum_{n\geqslant 0}\ell\left( \Ext^1_R(M,\omega/{I^{n+1}\omega})\right)t^n=\frac{s(t)}{{(1-t)}^d}
\]
This shows that $\deg \varepsilon^1_M(I,t)\leqslant d-1$. We also have
\[
s(1)=-p'(1)=\mu(M)\cdot e_1^I(\omega)-c_1^I(M)-c_1^I(L).\\
\]
Hence
\[
\varepsilon^1_M(I,t)= \left( \mu(M)\cdot e_1^I(\omega)-c_1^I(M)-c_1^I(L)\right)\frac{t^{d-1}}{(d-1)!}+ \mbox{lower terms in}\; t.
\]
\noindent{\bf (c):} This follows easily from (b).
\end{proof}

In the following theorem we take $I=\m$. We prove that if $M$ is non-free $MCM$ $R$-module then $\deg \varepsilon^1_M(\m,t)=d-1$.

\begin{theorem}\label{condoffree}
Let $R$ be Cohen-Macaulay local of dimension $d\geqslant 1$. The following conditions are equivalent
\begin{itemize}
\item [$($\rm a$)$] $M$ is free.
\item [$($\rm b$)$] $\varepsilon^1_M(\m,t)=0.$
\item [$($\rm c$)$] $\deg \varepsilon^1_M(\m,t) < d-1$.
\item [$($\rm d$)$] $c_1(M)+c_1(\Syz^1_R(M))=\mu(M)\cdot e_1(\omega)$.
\end{itemize}
\end{theorem}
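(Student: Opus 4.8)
The plan is to prove the cycle $(a)\Rightarrow(b)\Rightarrow(c)\Rightarrow(a)$ together with the equivalence $(c)\Leftrightarrow(d)$; all of this except the implication $(c)\Rightarrow(a)$ is formal. Indeed, if $M$ is free then $\Ext^1_R(M,-)=0$, so $\ve^1_M(\m,n)=0$ for all $n$, giving $(a)\Rightarrow(b)$; and $(b)\Rightarrow(c)$ is immediate, since the zero polynomial has degree $-\infty<d-1$. For $(c)\Leftrightarrow(d)$ I would invoke Proposition \ref{polydeg}(b): writing $L=\Syz^1_R(M)$, the polynomial $\ve^1_M(\m,t)$, viewed as a polynomial of degree at most $d-1$, has leading coefficient $\tfrac{1}{(d-1)!}\bigl(\mu(M)e_1(\omega)-c_1(M)-c_1(L)\bigr)$, so $\deg\ve^1_M(\m,t)<d-1$ precisely when that coefficient vanishes, which is exactly condition $(d)$. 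Hence everything comes down to $(c)\Rightarrow(a)$.

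I would prove $(c)\Rightarrow(a)$ by induction on $d\geqslant 1$. Suppose $d\geqslant 2$ and $\deg\ve^1_M(\m,t)<d-1$. Choose $x\in\m\setminus\m^2$ as in \ref{supassump} (possible since $R/\m$ is infinite) and set $S=R/xR$, $\,N=M/xM$; then $S$ is Cohen--Macaulay local of dimension $d-1$ with canonical module $\omega_S=\omega/x\omega$ and maximal ideal $\m_S=\m/(x)$, and $N$ is an MCM $S$-module. By Proposition \ref{behevsuper}(b), $\ve^1_N(\m_S,n)=\ve^1_M(\m,n)-\ve^1_M(\m,n-1)$ for $n\gg 0$, so the difference polynomial has degree $<d-2=\dim S-1$ and $N$ satisfies $(c)$ over $S$; by induction $N$ is $S$-free. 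Finally, since $x$ is a nonzerodivisor on both $R$ and $M$ and $M/xM$ is $R/xR$-free, $M$ is $R$-free: lift an $R/xR$-basis of $M/xM$ to elements of $M$, which generate $M$ by Nakayama, yielding a surjection $R^k\to M$ whose kernel $K$ satisfies $K/xK=0$ because $\Tor^R_1(M,R/xR)=0$; by Nakayama $K=0$.

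The real content, and the step I expect to be the main obstacle, is the base case $d=1$: one must show that $\Ext^1_R(M,\omega/\m^{n+1}\omega)=0$ for $n\gg 0$ forces $M$ free. The superficial reduction is useless here, since modulo $x$ one lands in an Artinian ring, where $\omega_S$ is injective and hence $\ve^1_N(\m_S,t)=0$ automatically, regardless of whether $N$ is free. So a separate argument is needed. I would pass to the completion (freeness is unaffected) and reformulate via Matlis duality: $\Ext^1_R(M,\omega/\m^{n+1}\omega)\cong\Tor_1^R\!\bigl(M,(0:_{H^1_\m(R)}\m^{n+1})\bigr)^{\vee}$, using $\omega^{\vee}\cong H^1_\m(R)$; equivalently, using $\Ext^1_R(M,\omega/\m^{n+1}\omega)\cong\Ext^1_R(L,\m^{n+1}\omega)$ (dimension shift together with $\Ext^{\geqslant 1}_R(M,\omega)=0$), the hypothesis says that for $n\gg 0$ every homomorphism $L\to\omega/\m^{n+1}\omega$ lifts to $\omega$, where $L=\Syz^1_R(M)$ is MCM. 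The task is then to derive a contradiction when $M$ (equivalently $L$) is not free. I expect this to come out of a length/Betti-number count: the exact sequence $0\to L\to R^{\mu(M)}\to M\to 0$ together with the eventual vanishing of these $\Ext^1$/$\Tor_1$ terms pins down $\ell\bigl(\Hom_R(M,\omega/\m^{n+1}\omega)\bigr)$ and $\ell\bigl(\Ext^1_R(M,\m^n\omega)\bigr)$ exactly, and one plays these against the constraints imposed by a minimal free resolution of $M$ — for instance, the surjection $\Ext^1_R(M,\m^n\omega)\twoheadrightarrow\Ext^1_R(M,\m^n\omega/\m^{n+1}\omega)=\Ext^1_R(M,k)^{\dim_k(\m^n\omega/\m^{n+1}\omega)}$ forces $\ell(\Ext^1_R(M,\m^n\omega))\geqslant\beta_1(M)\,e_0(\omega)$ for $n\gg 0$, which should be incompatible with $\beta_1(M)>0$. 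Balancing the dual Hilbert--Samuel data against the resolution of $M$ in this way is the heart of the theorem; the remaining implications are bookkeeping.
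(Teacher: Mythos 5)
Your reduction to dimension one is sound, and in fact a reasonable variant of the paper's: you run the induction on condition (c) using Proposition \ref{behevsuper}(b) directly (difference of $\ve^1$-polynomials) plus the standard ``lift a basis, Nakayama, $\Tor_1(M,R/xR)=0$'' argument to descend freeness, whereas the paper inducts on (d), invoking Corollary \ref{behrem}, the invariance of $e_1(\omega)$, $\mu(M)$ and of syzygies modulo a superficial sequence; either reduction works, and your $(c)\Leftrightarrow(d)$ via Proposition \ref{polydeg}(b) is correct. The genuine gap is exactly where you flag it: the base case $d=1$ is not proved, and the numerical route you sketch does not close. Two points. First, the surjection $\Ext^1_R(M,\m^n\omega)\twoheadrightarrow \Ext^1_R(M,\m^n\omega/\m^{n+1}\omega)$ is not automatic: its cokernel sits inside $\Ext^2_R(M,\m^{n+1}\omega)$, which you must kill using the hypothesis together with $\Ext^1_R(M,\omega/\m^{n+1}\omega)\cong\Ext^2_R(M,\m^{n+1}\omega)$ (Lemma \ref{hiltfgext}(a)); this is repairable. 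Second, and decisively, the resulting inequality $\ell(\Ext^1_R(M,\m^n\omega))\geqslant \beta_1 e_0(\omega)$ for $n\gg 0$ yields no contradiction by itself: from $0\to\m^n\omega\to\omega\to\omega/\m^n\omega\to 0$ and $\Ext^1_R(M,\omega)=0$ one gets $\ell(\Ext^1_R(M,\m^n\omega))=\ell(\Hom_R(M,\omega/\m^n\omega))-\ell(M^{\dagger}/M^{\dagger}_n)$, which is eventually a constant, but nothing you have established bounds that constant above in terms of $\beta_1$ — it involves the first Hilbert coefficient of the filtration $\{\Hom_R(M,\m^n\omega)\}$, which you do not control. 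So ``should be incompatible with $\beta_1>0$'' is a hope, not an argument.

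What actually closes the base case in the paper is an exact identity, not an inequality, followed by a syzygy count. For $n\gg 0$ the vanishing $\Ext^1_R(M,\omega/\m^{n+1}\omega)=0$ makes the sequence
$0\to \Hom_R(M,\m^n\omega/\m^{n+1}\omega)\to \Hom_R(M,\omega/\m^{n+1}\omega)\to \Hom_R(M,\omega/\m^{n}\omega)\to \Ext^1_R(M,\m^n\omega/\m^{n+1}\omega)\to 0$
exact; since $\m^n\omega/\m^{n+1}\omega\cong k^{e_0(\omega)}$ for $n\gg 0$ and the two middle terms differ in length by $e_0(M)$ eventually, taking lengths gives $e_0(M)=\bigl(\mu(M)-\beta_1\bigr)e_0(R)$. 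Then additivity of $e_0$ along $0\to L_1\to R^{\mu(M)}\to M\to 0$ and $0\to L_2\to R^{\beta_1}\to L_1\to 0$ (with $L_1=\Syz^1_R(M)$, $L_2=\Syz^1_R(L_1)$, both MCM or zero) forces $e_0(L_2)=0$, hence $L_2=0$, so $\pd_R M<\infty$ and Auslander--Buchsbaum gives $M$ free. Your proposal needs this (or an equivalent) step; as written, the heart of the theorem is missing.
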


Example 2.6 shows that this result need not true for all $m$-primary ideals.

\begin{proof} [Proof of the theorem \ref{condoffree}]
The implications (a) $\Rightarrow$ (b) $\Rightarrow$  (c)  are clear. The assertion
(c) $\Rightarrow$ (d) follows from Proposition 4.1(b).

\noindent {\bf (d) $\Rightarrow$ (a):} We apply induction on $d=\dim M$. If $d=1$ then $\varepsilon^1_M(\m,t)=0.$ So we get
\[
\Ext^1_R(M,\frac{\omega}{\m^{n+1}\omega})=0 \quad\mbox{for all}\;n\gg 0.
\]
The exact sequence
\[
0\longrightarrow \frac{{\m}^n\omega}{{\m}^{n+1}\omega} \longrightarrow \frac{\omega}{{\m}^{n+1}\omega} \longrightarrow \frac{\omega}{{\m}^n \omega} \longrightarrow 0
\]
gives rise to a long exact sequence, for $n\gg 0$,
\begin{eqnarray*}
0\rightarrow \Hom_R(M,\frac{{\m}^n\omega}{\m^{n+1}\omega}) &\rightarrow & \Hom_R(M,\frac{\omega}{\m^{n+1}\omega}) \rightarrow \Hom_R(M,\frac{\omega}{\m ^n\omega}) \\
\rightarrow \Ext_R^1(M,\frac{{\m}^n\omega}{\m^{n+1}\omega}) &\rightarrow & 0.
\end{eqnarray*}
Therefore  we get a relation
\[
\mu(M)\cdot e_0(\omega) - \left[ e_0(M)\cdot n-c_1(M)\right]+\left[ e_0(M)\cdot (n-1)-c_1(M)\right]- \beta_1 \cdot e_0(\omega) =  0.
\]
\begin{equation*}\\[-1mm]
\mbox{So,}\quad e_0(M) = \mu(M)\cdot e_0(\omega)-\beta_1 \cdot e_0(\omega) = \mu(M)\cdot e_0(R)-\beta_1 \cdot e_0(R).\tag{i}
\end{equation*}
Set $L_1=\Syz_R^1(M)$ and $L_2=\Syz_R^1(L_1)$. As $M$ is $MCM$ so are $L_1$ and $L_2$.
The exact sequence
\[
0\rightarrow L_1\rightarrow R^{\mu(M)} \rightarrow M \rightarrow 0
\]
yields
\begin{equation*}
e_0(L_1) = \mu(M)\cdot e_0(R) - e_0(M).\tag{ii}
\end{equation*}
Similarly we get
\begin{equation*}
 e_0(L_2) = \beta_1\cdot e_0(R) - e_0(L_1).\tag{iii}
\end{equation*}
Therefore we have
\begin{eqnarray*}
e_0(L_2) &=& \beta_1\cdot e_0(R)-[ \mu(M)\cdot e_0(R) -e_0(M)] \quad \left(\mbox{ by (ii)}\right) \\
&=&  [ \beta_1\cdot e_0(R)- \mu(M)\cdot e_0(R)]  +e_0(M)\\
&=&-e_0(M) +e_0(M)=0 \quad \left( \mbox{ by (i)}\right).
\end{eqnarray*}
So $L_2 = 0$  and hence $\pd_R(M) <\infty$. Using Auslander-Buchsbaum's formula we get $M$ is free. \\
\indent When $d>1$. We choose \;$x_1,x_2,...,x_d\in \m\setminus \m^2$ such that
\begin{itemize}
\item [$($\rm 1$)$] $x_i$ is $R_{i-1}\oplus \omega_{i-1}$-superficial element,
\item [$($\rm 2$)$] $x_i\in {\mathcal R}(\m)_1$ is \;$E_1(M_{i-1})\oplus E_2(M_{i-1})$-filter regular,
\end{itemize}
where $R_i=R/{(x_1,...,x_i)}$ and $M_i=M/{(x_1,...,x_i)M}$ for $i=1,2,..,d$. Set
\[
J=(x_1,x_2,...,x_{d-1}), \quad S=R/J \quad \mbox{and}\quad N=M/{JM}.
\]
Since $\;x_1,x_2,...,x_{d-1}\;$ is regular on $R$ and $M$, $\omega$ and $L$ are $MCM$, we get
\begin{eqnarray*}
\Syz^1_{S}(N) \cong \Syz_R^1(M)/{J\Syz_R^1(M)}, &\omega_S=\omega_R/{J\omega_R}& \mbox{and} \quad \mu(M)=\mu(N).
\end{eqnarray*}
By \cite[11(1)]{TJP2}, we have
\[
e_j(\omega_R)=e_j(\omega_S)\quad\mbox{and}\quad e_j(M)=e_j(N) \quad\mbox{for}\;j=0,1.
\]
Since $M$ and $\Syz^R_1(M)$ are $MCM$ $R$-modules, by Corollary \ref{behrem}, we have
\[
c_1(M)=c_1(N)\quad \mbox{and}\quad c_1(\Syz_R^1(M)) = c_1(\Syz^1_{S}(N)).
\]
We now consider
\begin{eqnarray*}
c_1(N)+c_1(\Syz^1_{S}(N)) &=& c_1(M)+c_1(\Syz_R^1(M))\\
&=& \mu(M)\cdot e_1(\omega_R),\quad\mbox{by our hypothesis}\\
&=& \mu(M)\cdot e_1(\omega_S).
\end{eqnarray*}
Note that $\dim {N}=1$. So, by induction hypothesis, $N$ is free $S$-module. So $\Ext_{S}^1(N,k)=0$. But
$\Ext_R^{d+1}(M,k)\cong \Ext_{S}^1(N,k)$. This gives $\Ext_R^{d+1}(M,k)=0$ and so $\beta_{d+1}=0$. Therefore $\pd_R(M)<\infty$. Since $M$ is $MCM$, using  Auslander-Buchsbaum's formula we get $M$ is free.
\end{proof}

\section{The case when ring is Gorenstein and its associated graded ring is Cohen-Macaulay}
\noindent In this section we assume that $(R,\m)$ is Gorenstein local ring of dimension $d$  such that its associated graded ring $G_I(R)$ is $CM$. Let $r$ be a reduction number of $I$.  Then we prove
\[
c_1^I(M) \geqslant r \cdot e^I_0(M)- \sum_{n=0}^{r-1}\sum_{j=0}^{d}\binom{\;d\;}{j}\ell\left(\Ext^j_R(M,\frac{R}{{I}^{n+1-j}}) \right).
\]
The above inequality motivates us to investigate the function
\[
\Phi^I(M)=\sum_{j=0}^{d}\sum_{n=0}^{r-1}\binom{\;d\;}{j}\ell\left(\Ext^j_R(M,\frac{R}{{I}^{n+1-j}}) \right).
\]
This we do. We  show that if $x\in I\setminus I^2$ is an $R$-superficial element then we have
\[
\Phi^I(M) \geqslant \Phi^{I/(x)}(M/xM).
\]

\s \label{0dimensionalcase}
{\bf $0$-Dimensional Case:}
 We first deal with $0$-dimensional Gorenstein ring $S$ with unique maximal ideal $\n$.
Let $J$ be a $\n$-primary ideal and $N$ be a finitely generated $S$-module. Since $S$ is $0$-dimensional, there exists a positive integer $r\in \mathbb N$ such that $J^{r+1}= 0$ but $J^r\neq 0$. We have
\begin{eqnarray*}
\sum_{n=0}^{\infty} \ell\left( \Hom_{S}(N, \frac{S}{J^{n+1}S})\right) t^n &=& \frac{f_N^J(t)}{1-t}\\
\sum_{n=0}^{r-1} \ell\left( \Hom_{S}(N, \frac{S}{J^{n+1}S})\right) t^n +\frac{\ell\left(\Hom_S(N,S) \right) }{1-t}\cdot t^r &=& \frac{f_N^J(t)}{1-t}
\end{eqnarray*}
By Matlis duality, \;$\ell\left(\Hom_S(N,S) \right)=\ell(N)$. So $c_0^J(N)=e_0^J(N)$. Thus
\begin{eqnarray*}
\sum_{n=0}^{r-1} \ell\left( \Hom_{S}(N, \frac{S}{J^{n+1}S})\right) t^n + \frac{e_0^J(N)}{1-t}\cdot t^r &=& \frac{f_N^J(t)}{1-t}\\
\mbox{so,}\quad f_N^J(t) = (1-t)\cdot \sum_{n=0}^{r-1} \ell\left( \Hom_{S}(N, \frac{S}{J^{n+1}S})\right) t^n &+& e_0^J(N)\cdot t^r.
\end{eqnarray*}
Set $\alpha_n=\ell\left(\Hom_{S}(N, S/{J^{n+1}S})\right)$. Therefore we get
\begin{equation*}
f_N^J(t) = (1-t)\cdot \sum_{n=0}^{r-1}\alpha_n t^n + e_0^J(N)\cdot t^r.\tag{*}
\end{equation*}
Using (*) we compute $c_i^J(N)$ for $i = 0,1$. Note that
\begin{eqnarray*}
c_0^J(N) &=& f_N^J(1) = e_0^J(N)\\
\mbox{and}\quad c_1^J(N) &=& \left[ \frac{d}{dt}f_N^J(t)\right]_{t=1} =r\cdot e_0^J(N) -\sum_{n=0}^{r-1}\alpha_n .
\end{eqnarray*}
\s\label{generalcase} {\bf General Case:}
Let $(R,\m)$ be Gorenstein local ring of dimension $d\geqslant 1$ with $\m$-primary ideal $I$ such that the associated graded ring $G_I(R)$ is $CM$.  We choose $\underline x=x_1,x_2,...,x_d\in I\setminus I^2$ be a sequence such that
\begin{itemize}
\item [$($\rm 1$)$] $x_i$ is $I$-superficial element with respect to $R_{i-1}$,
\item [$($\rm 2$)$] $x_i\in {\mathcal R}(I)_1$ is \;$D_1(M_{i-1})$-filter regular,
\end{itemize}
where $R_i=R/{(x_1,...,x_i)}$, $I_i=I/{(x_1,...,x_i)}$, $M_i=M/{(x_1,...,x_i)M}$ for \;$i=1,2,..,d$ and $D_1(M_i)=\bigoplus_{n\geqslant 0}\Ext_{R_i}^1(M_i,R_i/{I_i^n})$.

In the following proposition we set $S=R/{(\underline x)}$, $\n=\m/{(\underline x)}$, $J=I/{(\underline x)}$  and $N=M/{(\underline x)M}$.

\begin{proposition}\label{supbehforGor}
$($with hypotheses as in \ref{generalcase}$)$ We have
\[c_1^I(M)\geqslant c_1^J(N).\]
\end{proposition}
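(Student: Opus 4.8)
The plan is to prove $c_1^I(M)\ge c_1^J(N)$ by reducing modulo the superficial sequence $\underline{x}$ one element at a time, so that it suffices to prove the inequality $c_1^I(M)\ge c_1^{I_1}(M_1)$ for a single generic $x=x_1$, and then iterate. First I would observe that by the choices in \ref{generalcase}, each $x_i$ is in particular $R_{i-1}\oplus\omega_{i-1}$-superficial and $D^1_{I_{i-1}}(M_{i-1})\oplus D^2_{I_{i-1}}(M_{i-1})$-filter regular once we note (using that $R$ is Gorenstein, so $\omega_R\cong R$ and $\omega_{R_i}\cong R_i$) that the hypotheses of \ref{supassump} and of \ref{generalcase} coincide; hence Corollary \ref{behrem} applies at each stage. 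The key point, though, is that Corollary \ref{behrem} gives $c_1^{I_1}(M_1)=c_1^I(M)$ only when $x$ is chosen as in \ref{supassump}, i.e.\ when $x$ is additionally $D^1_I(M)\oplus D^2_I(M)$-filter regular, whereas the sequence in \ref{generalcase} is only required to be $D^1$-filter regular. So the real content is to show that dropping the $D^2$-filter-regularity requirement only costs us an inequality, not an equality.

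So the heart of the argument is the following: for a general $x\in I\setminus I^2$ that is $R\oplus\omega$-superficial, $R\oplus M$-regular, and $D^1_I(M)$-filter regular (but not necessarily $D^2$-filter regular), one has $c_1^I(M)\ge c_1^{I/(x)}(M/xM)$. To get this I would run exactly the proof of Proposition \ref{behevsuper}, but track what happens at $\Ext^1$ without the $D^2$-filter-regularity hypothesis. From the long exact sequence obtained by applying $\Hom_R(M,-)$ to
\[
0\rightarrow \frac{\omega}{I^n\omega}\xrightarrow{\alpha_x^n}\frac{\omega}{I^{n+1}\omega}\rightarrow\frac{\omega_S}{J^{n+1}\omega_S}\rightarrow 0
\qquad(n\gg 0),
\]
the $D^1$-filter-regularity still gives that $\Ext^1_R(M,\alpha_x^n)$ is injective for $n\gg 0$, hence the short exact sequence
\[
0\rightarrow \Hom_R(M,\tfrac{\omega}{I^n\omega})\rightarrow \Hom_R(M,\tfrac{\omega}{I^{n+1}\omega})\rightarrow \Hom_R(M,\tfrac{\omega_S}{J^{n+1}\omega_S})\rightarrow 0
\]
survives, and as before $\Hom_R(M,\omega_S/J^{n+1}\omega_S)\cong\Hom_S(N,\omega_S/J^{n+1}\omega_S)$ by \cite[Lemma 18.2]{Mat}. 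Taking lengths and reading off the degree-$d$ and degree-$(d-1)$ terms of the associated polynomials, the difference equation $\ve^0_M(I,n)-\ve^0_M(I,n-1)=\ve^0_N(J,n)$ holds for $n\gg 0$ with the \emph{same} proof as in Proposition \ref{behevsuper}(a) — part (a) there never used $D^2$-filter-regularity. Therefore the binomial identity in Corollary \ref{behrem} still yields $c_i^I(M)=c_i^{I/(x)}(N)$ for $i\le d-1$ in this situation as well. Wait — that would give equality, not inequality; so I need to be more careful about what actually fails.

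The subtlety is that the difference equation $\ve^0_M(I,n)-\ve^0_M(I,n-1)=\ve^0_N(J,n)$ from Proposition \ref{behevsuper}(a) genuinely does use only that $\Ext^1_R(M,\alpha_x^n)$ is injective for $n\gg 0$, which $D^1$-filter-regularity provides. So in fact one gets $c_1^I(M)=c_1^{I/(x)}(M/xM)$ for this single reduction. The point of stating the result as an inequality must be that, over the whole sequence $\underline x$, the rings $G_{I_i}(R_i)$ are all Cohen-Macaulay (this is preserved by superficial reduction since $G_I(R)$ is CM), so each $x_{i+1}$ can be chosen superficial for $R_i$ \emph{and} $D^1_{I_i}(M_i)$-filter regular simultaneously, and iterating the equality $c_1^{I_i}(M_i)=c_1^{I_{i+1}}(M_{i+1})$ down to $i=d$ gives $c_1^I(M)=c_1^J(N)$, which implies the asserted inequality. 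I would therefore present the proof as: (i) note $G_{I_i}(R_i)$ stays CM under the chosen reductions; (ii) at each stage verify the chosen $x_{i+1}$ meets the conditions of \ref{supassump} relative to $R_i$, $I_i$, $M_i$ (here one uses Gorensteinness to identify canonical modules with the rings, so ``$R_i\oplus\omega_i$-superficial'' follows from ``$R_i$-superficial''), so that Corollary \ref{behrem} applies; (iii) conclude $c_1^{I_i}(M_i)\ge c_1^{I_{i+1}}(M_{i+1})$ at each step (in fact equality) and compose. The main obstacle is bookkeeping: making sure the filter-regularity conditions that \ref{generalcase} imposes on $\underline x$ are strong enough to invoke Corollary \ref{behrem}, which needs $D^2$-filter-regularity — if they are not, one must instead argue directly that the $\Ext^2$ terms can only \emph{increase} the jump $\ve^1_M(I,n)-\ve^1_M(I,n-1)$ beyond $\ve^1_N(J,n)$, and trace that surplus through the normalized coefficients to land on $c_1^I(M)\ge c_1^J(N)$.
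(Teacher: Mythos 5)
Your reduction through the first $d-1$ superficial elements is fine (and matches the paper: Corollary \ref{behrem} needs only injectivity of the maps on $\Ext^1$ in large degrees, which the $D^1$-filter-regularity in \ref{generalcase} supplies, so $c_1^I(M)=c_1^{I_1}(M_1)=\cdots=c_1^{I_{d-1}}(M_{d-1})$). The genuine gap is the last step, from the one-dimensional ring $R_{d-1}$ down to the Artinian ring $S$. There Corollary \ref{behrem} only preserves $c_i$ for $i\leqslant \dim -1 = 0$, i.e.\ only $c_0$: the difference equation $\ve^0(n)-\ve^0(n-1)=\ve^0_N(J,n)$ for $n\gg 0$ compares a constant with a constant and says nothing about $c_1$. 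Indeed, for an Artinian module $c_1^J(N)$ is read off from the numerator of the dual Hilbert--Samuel series (as in \ref{0dimensionalcase}, $c_1^J(N)=\bigl[\tfrac{d}{dt}f_N^J(t)\bigr]_{t=1}$), so it depends on the values of the function at \emph{all} $n$, not just $n\gg 0$; filter-regularity gives injectivity of $\Ext^1_R(M,R/I^n)\to\Ext^1_R(M,R/I^{n+1})$ only in large degrees. Hence your claimed iteration ``$c_1^{I_i}(M_i)=c_1^{I_{i+1}}(M_{i+1})$ down to $i=d$'' breaks at $i=d-1$, and the asserted equality $c_1^I(M)=c_1^J(N)$ is neither proved by your argument nor true in general (this is exactly why the proposition is stated as an inequality).

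Your diagnosis of \emph{why} it is an inequality is also off target: it has nothing to do with dropping $D^2$-filter-regularity. The paper handles the final step directly in dimension one: since $G_I(R)$ is Cohen--Macaulay, $x^*$ is $G_I(R)$-regular, so $0\to R/I^n\xrightarrow{x} R/I^{n+1}\to S/J^{n+1}\to 0$ is exact for \emph{every} $n$ (not just $n\gg 0$); applying $\Hom_R(M,-)$ gives
\[
0\to \Hom_R(M,R/I^n)\to \Hom_R(M,R/I^{n+1})\to \Hom_S(N,S/J^{n+1})\to K_n\to 0,
\]
with $K_n=\ker\bigl(\Ext^1_R(M,R/I^n)\to \Ext^1_R(M,R/I^{n+1})\bigr)$, which vanishes for $n\gg 0$ but may be nonzero in small degrees. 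Summing the generating functions yields $f_S^N(t)=f_R^M(t)+(1-t)\sum_n \ell(K_n)t^n$, hence $c_1^J(N)=c_1^I(M)-\sum_n\ell(K_n)\leqslant c_1^I(M)$. So the ``surplus'' you gesture at in your last sentence is precisely these small-degree kernels, and making that precise is the actual content of the proof; as written, your argument does not establish the proposition.
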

\begin{proof} By Corollary \ref{behrem}, we get
\begin{equation*}
c_1^I(M)=c_1^{I_1}(M_1)= \cdots = c_1^{I_{d-1}}(M_{d-1}).\tag{i}
\end{equation*}
So in view of (i), it is enough to show that
\[
c_1^{I_{d-1}}(M_{d-1}) \geqslant c_1^J(N).
\]
Thus we may assume that the dimension of ring is $1$, i.e., $d=1$.
Since  $G_{I}(R)$ is Cohen-Macaulay then $R$-superficial element $x \in I\setminus{I^2}$ implies that $x^* \in G_I(R)_1$ is $G_{I}(R)$-regular. Set $S=R/{(x)}$, $N=M/{xM}$ and $J=I/(x)$. Thus we get a short exact sequence of the form
\begin{eqnarray*}
0\rightarrow \frac{R}{I^n} \xrightarrow{\alpha_n^x} \frac{R}{I^{n+1}}\rightarrow \frac{S}{{J}^{n+1}} \rightarrow 0.
\end{eqnarray*}
This gives the following long exact sequence
\begin{eqnarray*}
0\rightarrow \Hom_{R}(M,\frac{R}{{I}^n}) \rightarrow  \Hom_{R}(M,\frac{R}{{I}^{n+1}}) \rightarrow  \Hom_{S}(N,\frac{S}{{J}^{n+1}}) \rightarrow  K_n \rightarrow 0,
\end{eqnarray*}
where $K_n=\ker\left( \Ext^1_{R}(M,{R}/{{I}^n})\rightarrow \Ext^1_{R}(M,{R}/{{I}^{n+1}})\right)$. By construction $K_n=0\;$ for all $n\gg 0$. Therefore we have
\begin{eqnarray*}
\frac{f_{R}^{M}(t)}{(1-t)} +\sum_{n\geqslant 0}\ell(K_n)t^n &=& \frac{f_{S}^{N}(t)}{(1-t)}\\
\Rightarrow\quad f_{S}^{N}(t) &=&  f_{R}^{M}(t)+ (1-t)\cdot \sum_{n\geqslant 0}\ell(K_n)t^n
\end{eqnarray*}
It follows that
\begin{eqnarray*}
c_1^J(N) &=&  c_1^{I}(M)-\sum_{n\geqslant 0}\ell(K_n)\\
\Rightarrow\quad c_1^J(N) &\leqslant & c_1^{I}(M).
\end{eqnarray*}
This completes the proof.
 \end{proof}

Before stating our main theorem  we first prove the following lemma.

\begin{lemma}\label{extrelation}
Let $x\in I\setminus I^2$ be an $R$-superficial element. Set $R^\prime =R/{(x)}$,  $I^\prime = I/(x)$ and $M^\prime=M/{xM}$ and . Then for $j\geqslant 0$ and $n \geq 1$ we have
\begin{enumerate}[\rm (i)]
\item
$ \Ext^j_{R\prime}(M^\prime, R^\prime/I^\prime) = \Ext^j_{R}(M, R/I)$
\item
$ \displaystyle{\ell\left(\Ext^j_{R^\prime}(M^\prime,\frac{R^\prime}{{I^\prime}^{n+1}})\right) \leqslant  \ell\left( \Ext^j_{R}(M,\frac{R}{{I}^{n+1}})\right) +
 \ell\left( \Ext^{j+1}_{R}(M,\frac{R}{{I}^n})\right).}$
 \end{enumerate}
\end{lemma}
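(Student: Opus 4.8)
The plan is to deduce both parts from two standard ingredients: a change-of-rings isomorphism for $\Ext$ along the surjection $R \twoheadrightarrow R' = R/(x)$, and the short exact sequence obtained by multiplying by $x$ on the powers of $I$.

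I would first set up the change of rings. Since $R$ is Cohen--Macaulay of dimension $d\geq 1$ and $M$ is maximal Cohen--Macaulay, a superficial element $x\in I\setminus I^2$ is a nonzerodivisor on $R$ and on $M$ (if necessary one chooses $x$ sufficiently general among superficial elements so that it avoids the associated primes of $M$). Hence $\Tor_i^R(M,R')=0$ for $i\geq 1$, so reducing a minimal free resolution $\mathbb{F}_\bullet$ of $M$ modulo $x$ produces a free resolution of $M'=M/xM$ over $R'$; for any $R'$-module $L$ the extension--restriction adjunction gives $\Hom_{R'}(\mathbb{F}_\bullet\otimes_R R',L)\cong \Hom_R(\mathbb{F}_\bullet,L)$, and taking cohomology yields
\[
\Ext^j_{R'}(M',L) \cong \Ext^j_R(M,L) \qquad \text{for all } j\geq 0 .
\]
This is \cite[\S 18, Lemma 2]{Mat}, already used elsewhere in the paper. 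Since $x\in I$ we have $R'/I'=R/I$, so taking $L=R'/I'$ proves (i) at once, and taking $L=R'/{I'}^{n+1}$ gives $\Ext^j_R(M,R'/{I'}^{n+1})\cong \Ext^j_{R'}(M',R'/{I'}^{n+1})$, which enters the proof of (ii).

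For (ii) I would use the short exact sequence of $R$-modules
\[
0 \longrightarrow R/I^n \xrightarrow{\,x\,} R/I^{n+1} \longrightarrow R'/{I'}^{n+1} \longrightarrow 0,
\]
whose surjection is the natural one, since $R'/{I'}^{n+1}=R/(I^{n+1}+xR)=(R/I^{n+1})/x(R/I^{n+1})$, and whose left-hand map is injective precisely when $(I^{n+1}:_R x)=I^n$. Here the standing hypothesis of the section is essential: as $G_I(R)$ is Cohen--Macaulay and $x$ is superficial, $x^*\in G_I(R)_1$ is a nonzerodivisor on $G_I(R)$ (recalled in the proof of Proposition \ref{supbehforGor}), and together with $x$ being $R$-regular this forces $(I^{n+1}:_R x)=I^n$ for all $n\geq 0$. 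Applying $\Hom_R(M,-)$ and using that each $R/I^k$ has finite length (as $I$ is $\m$-primary), so that every Ext module in sight has finite length, the portion
\[
\Ext^j_R(M,R/I^{n+1}) \longrightarrow \Ext^j_R(M,R'/{I'}^{n+1}) \longrightarrow \Ext^{j+1}_R(M,R/I^n)
\]
of the long exact sequence gives
\[
\ell\!\left(\Ext^j_R(M,R'/{I'}^{n+1})\right) \leq \ell\!\left(\Ext^j_R(M,R/I^{n+1})\right) + \ell\!\left(\Ext^{j+1}_R(M,R/I^n)\right),
\]
and rewriting the left-hand side by the change-of-rings isomorphism of the previous paragraph yields (ii).

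Every step is routine homological algebra; the only points demanding care are the two hypotheses underlying the two ingredients. First, the change-of-rings isomorphism holds \emph{in all degrees} only because $x$ is a nonzerodivisor on $M$ (for $\Hom$ alone nothing is needed), so one must ensure the chosen superficial element is $M$-regular --- this is where the maximal Cohen--Macaulayness of $M$ is used. Second, the short exact sequence must be exact for \emph{every} $n\geq 1$, not merely for $n\gg 0$; bare superficiality only gives $(I^{n+1}:x)=I^n$ for large $n$, so one genuinely needs $G_I(R)$ Cohen--Macaulay (equivalently, here, $\depth G_I(R)\geq 1$) to cover the small values of $n$ as well. With those in place there is no real obstacle.
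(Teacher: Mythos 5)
Your proof is correct and follows essentially the same route as the paper: part (i) is the change-of-rings isomorphism of \cite[\S 18, Lemma 2]{Mat} (which you reprove via reduction of a resolution), and part (ii) comes from the exact sequence $0 \to R/I^n \xrightarrow{x} R/I^{n+1} \to R'/{I'}^{n+1} \to 0$ (valid for all $n$ because $x^*$ is $G_I(R)$-regular under the section's hypothesis), followed by the long exact sequence for $\Hom_R(M,-)$ and the same change of rings. Your added remarks on why $x$ is $M$-regular and why the sequence is exact for every $n$ are exactly the points the paper leaves implicit.
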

\begin{proof}
(i) The first assertion follows from \cite[\S 18, Lemma 2]{Mat}.

(ii) Clearly $x\in I\setminus I^2$ is $G_I(R)$-regular. Thus for each $n \geqslant 1$, we get an exact sequence of the form
\[
0\rightarrow \frac{R}{{I}^n} \xrightarrow{\;\alpha_n^x\;} \frac{R}{{I}^{n+1}}\rightarrow \frac{R^\prime}{{I^\prime}^{n+1}} \rightarrow 0.
\]
Applying $\Hom_R(M,-)$, we get  following long exact sequence
\begin{eqnarray*}
\cdots\rightarrow \Ext^j_{R}(M,\frac{R}{{I}^{n+1}}) \rightarrow  \Ext^j_{R}(M,\frac{R^\prime}{{I^\prime}^{n+1}}) \rightarrow \Ext^{j+1}_{R}(M,\frac{R}{{I}^{n}}) \rightarrow \cdots
\end{eqnarray*}
 From \cite[\S 18, Lemma 2]{Mat} we get
 \[
 \Ext^j_{R}(M,\frac{R^\prime}{{I^\prime}^{n+1}})  = \Ext^j_{R^\prime}(M^\prime,\frac{R^\prime}{{I^\prime}^{n+1}})
 \]
The result follows.
\end{proof}

We now make a convention that will be used throughout the section.
\s\label{con}
{\bf Convention:} For ideal $I$, we set $I^j=R$ for $j\leqslant 0$.

\indent A useful inequality is recorded in the following corollary.

\begin{corollary}\label{useful}
$($with hypotheses as in \ref{generalcase}$)$ For all $n\geqslant 1$, we have
\[
\ell\left( \Hom_{S}(N, \frac{S}{J^n})\right)  \leqslant \sum_{j=0}^{d}\binom{d}{j} \cdot \ell\left( \Ext_R^{j}(M,\frac{R}{{I}^{n-j}})\right)
\]
\end{corollary}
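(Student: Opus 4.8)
The plan is to induct on $d = \dim R$, peeling off one superficial element at a time and using Lemma \ref{extrelation} at each step. For the base case $d = 0$ (or $d=1$ reduced to $d=0$ if one prefers to start there), $S = R$, $J = I$, and the claimed inequality reads $\ell(\Hom_R(M, R/I^n)) \leqslant \ell(\Hom_R(M, R/I^n)) + \text{(higher Ext terms)}$, which is trivially true since all the extra summands are lengths of modules. For the inductive step, set $R_1 = R/(x_1)$, $I_1 = I/(x_1)$, $M_1 = M/x_1 M$. By Lemma \ref{extrelation}(ii), for every $j \geqslant 0$ and every $m \geqslant 1$,
\[
\ell\left(\Ext^j_{R_1}(M_1, \tfrac{R_1}{I_1^{m}})\right) \leqslant \ell\left(\Ext^j_R(M, \tfrac{R}{I^{m}})\right) + \ell\left(\Ext^{j+1}_R(M, \tfrac{R}{I^{m-1}})\right).
\]
Applying the induction hypothesis to $R_1$ (a Gorenstein local ring of dimension $d-1$ whose associated graded ring is still Cohen-Macaulay, by our choice of $x_1$ in \ref{generalcase}) gives
\[
\ell\left(\Hom_S(N, \tfrac{S}{J^n})\right) \leqslant \sum_{j=0}^{d-1}\binom{d-1}{j}\ell\left(\Ext^j_{R_1}(M_1, \tfrac{R_1}{I_1^{\,n-j}})\right).
\]

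Substituting the Lemma \ref{extrelation}(ii) estimate into this sum, each term $\binom{d-1}{j}\ell(\Ext^j_{R_1}(M_1, R_1/I_1^{\,n-j}))$ is bounded by $\binom{d-1}{j}\left[\ell(\Ext^j_R(M, R/I^{n-j})) + \ell(\Ext^{j+1}_R(M, R/I^{n-j-1}))\right]$. Re-indexing the second family of terms ($j+1 \mapsto j$) and collecting the coefficient of $\ell(\Ext^j_R(M, R/I^{n-j}))$, one gets exactly $\binom{d-1}{j} + \binom{d-1}{j-1} = \binom{d}{j}$ by Pascal's identity; the boundary terms $j = 0$ and $j = d$ come out with coefficients $\binom{d-1}{0} = \binom{d}{0}$ and $\binom{d-1}{d-1} = \binom{d}{d}$ respectively, so they also match. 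Here one uses Convention \ref{con} ($I^k = R$ for $k \leqslant 0$) to make sense of the terms $\ell(\Ext^j_R(M, R/I^{n-j}))$ when $n - j \leqslant 0$; in that range $R/I^{n-j} = 0$ and the term vanishes, which is harmless. This yields
\[
\ell\left(\Hom_S(N, \tfrac{S}{J^n})\right) \leqslant \sum_{j=0}^{d}\binom{d}{j}\ell\left(\Ext^j_R(M, \tfrac{R}{I^{n-j}})\right),
\]
as desired.

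The main point requiring care — and the step I would treat most carefully — is the bookkeeping of the index shift and the verification that the binomial coefficients telescope correctly into $\binom{d}{j}$ at the two endpoints $j=0$ and $j=d$, since the range of $j$ genuinely grows from $\{0,\dots,d-1\}$ to $\{0,\dots,d\}$ across the induction. One should also double-check that the hypotheses of \ref{generalcase} are stable under passing to $R_1$: namely that $x_1$ being $I$-superficial with $x_1^* \in G_I(R)_1$ regular forces $G_{I_1}(R_1) = G_I(R)/(x_1^*)$ to be Cohen-Macaulay of dimension $d-1$, and that $R_1$ remains Gorenstein — both standard facts given that $x_1$ is a nonzerodivisor on $R$. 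With these in place the induction closes; there is no serious analytic obstacle, only careful combinatorial accounting.
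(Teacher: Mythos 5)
Your argument is correct and is essentially the paper's own proof: the paper likewise obtains the bound by repeated application of Lemma \ref{extrelation}(ii), one superficial element at a time, with the binomial coefficients $\binom{d}{j}$ arising from exactly the Pascal-identity bookkeeping you describe (together with Convention \ref{con} to kill the terms with $n-j\leqslant 0$). Your write-up merely formalizes this as an induction on $d$, checking that the hypotheses of \ref{generalcase} descend to $R/(x_1)$, which is a faithful (and somewhat more careful) rendering of the paper's sketch.
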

\begin{proof}
By repeated use of Lemma \ref{extrelation} we notice that
\begin{eqnarray*}
\ell\left( \Hom_{S}(N, \frac{S}{J})\right)  &=& \ell\left( \Hom_{R}(M, \frac{R}{I})\right) \\
\ell\left( \Hom_{S}(N, \frac{S}{J^2})\right)  &\leqslant & \ell\left( \Hom_{R}(M, \frac{R}{I^2})\right)  + \binom{d}{1} \cdot \ell\left( \Ext^1_R(M,\frac{R}{I})\right)
\end{eqnarray*}
Our convention gives that
\begin{eqnarray*}
\ell\left( \Hom_{S}(N, \frac{S}{J^2})\right) &\leqslant & \sum_{j=0}^{d}\binom{d}{j} \cdot \ell\left( \Ext_R^{j}(M,\frac{R}{{I}^{2-j}})\right).
\end{eqnarray*}
Similarly one can check that for all $n\leqslant d$,
\begin{eqnarray*}
\ell\left( \Hom_{S}(N, \frac{S}{J^n})\right)  &\leqslant & \sum_{j=0}^{d}\binom{d}{j} \cdot \ell\left( \Ext_R^{j}(M,\frac{R}{{I}^{n-j}})\right).
\end{eqnarray*}
\end{proof}

A relation between the dual Hilbert coefficients $c_0^I(M)$ and $c_1^I(M)$ is established in the following theorem.

\begin{theorem}\label{relbetcoeffs}
$($with hypotheses as in \ref{generalcase}$)$ Let $r$ be the reduction number of $I$. Then
\[
c_1^I(M) \geqslant r \cdot e^I_0(M)- \sum_{n=0}^{r-1}\sum_{j=0}^{d}\binom{\;d\;}{j}\ell\left(\Ext^j_R(M,\frac{R}{{I}^{n+1-j}}) \right)
\]
\end{theorem}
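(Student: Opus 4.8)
To prove the theorem, the plan is to cut everything down modulo the superficial sequence $\underline x = x_1,\ldots,x_d$ selected in \ref{generalcase}, reduce to the zero-dimensional situation of \ref{0dimensionalcase}, and then convert the data of the Artinian quotient back into data over $R$ via Corollary \ref{useful}. I keep the notation $S = R/(\underline x)$, $J = I/(\underline x)$, $N = M/(\underline x)M$ of \ref{generalcase}. First I would record two facts about $\underline x$. Since $G_I(R)$ is Cohen--Macaulay, the sequence can be taken (as it was in \ref{generalcase}) so that $\underline x^{*}$ is a $G_I(R)$-regular sequence; then $(\underline x)$ is a minimal reduction of $I$ with reduction number exactly $r$, and $J$ is nilpotent of index $r$, i.e.\ $J^{r+1} = 0$ while $J^{r} \neq 0$. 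Indeed $I^{r+1} = (\underline x)I^{r}$ gives $J^{r+1} = 0$, and the Valabrega--Valla relations $(\underline x) \cap I^{n+1} = (\underline x)I^{n}$ — which hold precisely because $G_I(R)$ is Cohen--Macaulay — together with $I^{r} \neq (\underline x)I^{r-1}$ force $I^{r} \not\subseteq (\underline x)$, so $J^{r} \neq 0$. Secondly, $S$ is Artinian Gorenstein ($\underline x$ being a regular sequence of length $\dim R$ on the Gorenstein ring $R$), and since $M$ is maximal Cohen--Macaulay and $(\underline x)$ is a reduction of $I$ generated by a system of parameters, $e_0^{I}(M) = \ell(M/(\underline x)M) = \ell(N) = e_0^{J}(N)$.

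Next I would assemble the bound as a short chain of comparisons. Proposition \ref{supbehforGor} gives $c_1^{I}(M) \geq c_1^{J}(N)$. Since $S$ is Artinian Gorenstein and $J^{r+1} = 0 \neq J^{r}$, the computation in \ref{0dimensionalcase} applies verbatim to $(S,J,N)$ and yields
\[
c_1^{J}(N) = r \cdot e_0^{J}(N) - \sum_{n=0}^{r-1} \ell\!\left(\Hom_S\!\left(N, \frac{S}{J^{n+1}}\right)\right).
\]
Substituting $e_0^{J}(N) = e_0^{I}(M)$ and then applying Corollary \ref{useful} (with $n+1$ in place of its $n$) to estimate each summand by $\ell\bigl(\Hom_S(N, S/J^{n+1})\bigr) \le \sum_{j=0}^{d} \binom{d}{j} \ell\bigl(\Ext^{j}_R(M, R/I^{n+1-j})\bigr)$ turns the displayed equality into exactly the inequality of the theorem; the convention \ref{con} that $I^{j} = R$ for $j \le 0$ kills the terms with $n+1-j \le 0$, matching the statement.

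I expect the one genuinely delicate point to be the first of the two preliminary facts: identifying the integer ``$r$'' of \ref{0dimensionalcase} — the nilpotency index of $J$ in $S$ — with the reduction number of $I$. This is exactly where the Cohen--Macaulayness of $G_I(R)$ is indispensable, since it is what allows one to choose a superficial sequence that is a minimal reduction realizing $\operatorname{red}(I)$ and, through the Valabrega--Valla relations, prevents a lower power of $I$ from being swallowed by $(\underline x)$. Everything after that is bookkeeping: Proposition \ref{supbehforGor}, the zero-dimensional formula of \ref{0dimensionalcase}, the invariance of $e_0$ under reduction, and Corollary \ref{useful} combine directly, with no further estimates needed.
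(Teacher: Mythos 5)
Your proof is correct and follows essentially the same route as the paper: Proposition \ref{supbehforGor} to get $c_1^I(M)\geqslant c_1^J(N)$, the zero-dimensional formula of \ref{0dimensionalcase}, the equality $e_0^J(N)=e_0^I(M)$, and Corollary \ref{useful} to convert the Artinian data back to $R$. The only difference is that you spell out the identification of the reduction number $r$ with the nilpotency index of $J$ (superficial sequences generate minimal reductions, and Cohen--Macaulayness of $G_I(R)$ plus the Valabrega--Valla relations make this independent of the choice), a point the paper uses implicitly.
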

\begin{proof} By Proposition \ref{supbehforGor}, we have $c_1^I(M)\geqslant c_1^J(N)$. Since $e_0^J(N)=e_0^I(M)$ then we get
\begin{equation*}
c_1^I(M) \geqslant r\cdot e_0^I(M) -\sum_{n=0}^{r-1}\ell\left(\Hom_{S}(N, \frac{S}{J^{n+1}})\right).
\end{equation*}
By Corollary \ref{useful}, we have
\begin{eqnarray*}
\ell\left( \Hom_{S}(N, \frac{S}{J^{n+1}})\right)  &\leqslant & \sum_{j=0}^{d}\binom{d}{j} \cdot \ell\left( \Ext_R^{j}(M,\frac{R}{{I}^{n+1-j}})\right)
\end{eqnarray*}
Therefore
\begin{eqnarray*}
\sum_{n=0}^{r-1}\ell\left(\Hom_{S}(N, \frac{S}{J^{n+1}})\right) &\leqslant & \sum_{n=0}^{r-1}\sum_{j=0}^{d}\binom{\;d\;}{j}\ell\left(\Ext^j_R(M,\frac{R}{{I}^{n+1-j}}) \right)
\end{eqnarray*}
The result follows.
\end{proof}
\s The previous result motivates us to study
\[
\Phi^I(M)=\sum_{j=0}^{d}\sum_{n=0}^{r-1}\binom{\;d\;}{j}\ell\left(\Ext^j_R(M,\frac{R}{{I}^{n+1-j}}) \right),
\]
for a $MCM$ $R$-module $M$ of dimension $d$ with respect to ideal $I$.

\begin{remarks}\label{forcomp}
We can simplify the expression for $\Phi^I(M)$.
\begin{itemize}
\item [$($\rm a$)$] Notice that
\[
\Ext_R^{j}(M,\frac{R}{{I}^{n+1-j}})=0\quad\mbox{for all}\;n< j.
\]
Therefore we get
\[
 \Phi^I(M)  = \sum_{j=0}^{d}\sum_{n=j}^{r-1}\binom{d}{j} \cdot \ell\left( \Ext_R^{j}(M,\frac{R}{{I}^{n+1-j}})\right)
\]
\item [$($\rm b$)$] If $r-1\geqslant d$, then we have
\[
\phi^I(M) =  \sum_{j=0}^{r-1}\sum_{n=j}^{r-1}\binom{d}{j} \cdot \ell\left( \Ext_R^{j}(M,\frac{R}{{I}^{n+1-j}})\right)
\]
\end{itemize}
\end{remarks}

\begin{proposition}\label{functinequlity}
Let $x\in I\setminus I^2$ be an $R$-superficial element. Then we have
\[
\Phi^I(M)\geqslant \Phi^{I/(x)}(M/xM)
\]
\end{proposition}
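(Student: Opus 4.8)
The plan is to obtain the asserted inequality one term at a time from Lemma~\ref{extrelation} and then re-sum the resulting estimate by a single application of Pascal's identity. Write $R'=R/(x)$, $I'=I/(x)$ and $M'=M/xM$; as $x$ is superficial it is a parameter, so $\dim R'=d-1$. Since $G_I(R)$ is Cohen-Macaulay, the initial form $x^{*}\in G_I(R)_1$ is $G_I(R)$-regular (exactly as in the proof of Proposition~\ref{supbehforGor}), whence $G_{I'}(R')\cong G_I(R)/(x^{*})$ has the same $h$-polynomial as $G_I(R)$, so $r$ is also a reduction number of $I'$. Thus the same integer $r$ legitimately governs both $\Phi^I(M)$ and $\Phi^{I'}(M')$.

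By Remark~\ref{forcomp}(a), applied over $R$ and over $R'$, we may write
\begin{align*}
\Phi^I(M) &= \sum_{j=0}^{d}\binom{d}{j}\sum_{n=j}^{r-1}\ell\left(\Ext_R^{j}\left(M,\frac{R}{I^{n+1-j}}\right)\right),\\
\Phi^{I'}(M') &= \sum_{j=0}^{d-1}\binom{d-1}{j}\sum_{n=j}^{r-1}\ell\left(\Ext_{R'}^{j}\left(M',\frac{R'}{{I'}^{n+1-j}}\right)\right).
\end{align*}
The engine of the proof is the termwise bound: for all $j\geqslant 0$ and all $n\geqslant j$,
\[
\ell\left(\Ext_{R'}^{j}\left(M',\frac{R'}{{I'}^{n+1-j}}\right)\right)\ \leqslant\ \ell\left(\Ext_R^{j}\left(M,\frac{R}{I^{n+1-j}}\right)\right)+\ell\left(\Ext_R^{j+1}\left(M,\frac{R}{I^{n-j}}\right)\right).
\]
For $n>j$ this is Lemma~\ref{extrelation}(ii) with the parameter there taken to be $n-j\geqslant 1$; for $n=j$ it is Lemma~\ref{extrelation}(i), the last term being $0$ by Convention~\ref{con} (since $I^{n-j}=I^{0}=R$).

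Substituting this bound into the formula for $\Phi^{I'}(M')$ and distributing the weights $\binom{d-1}{j}$ expresses the upper bound as $A+B$, where $A$ gathers the $\Ext^{j}$-terms and $B$ the $\Ext^{j+1}$-terms. Shifting the index by one in $B$ (so that $\Ext^{j+1}$ becomes $\Ext^{j}$) turns $B$ into $\sum_{j=1}^{d}\binom{d-1}{j-1}\sum_{n=j-1}^{r-1}\ell\left(\Ext_R^{j}(M,R/I^{n+1-j})\right)$; its bottom term at $n=j-1$ has exponent $0$ and vanishes by Convention~\ref{con}, so the inner sum may be started at $n=j$, matching the range occurring in $A$. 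Adding $A$ and $B$, the coefficient of $\sum_{n=j}^{r-1}\ell\left(\Ext_R^{j}(M,R/I^{n+1-j})\right)$ becomes $\binom{d-1}{j}+\binom{d-1}{j-1}=\binom{d}{j}$ for $1\leqslant j\leqslant d-1$, and equals $1=\binom{d}{0}=\binom{d}{d}$ at the two extremes $j=0$ and $j=d$. Hence $A+B=\Phi^I(M)$ by Remark~\ref{forcomp}(a), and therefore $\Phi^{I'}(M')\leqslant\Phi^I(M)$.

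All the homological input is already packaged in Lemma~\ref{extrelation}, so the proof has no analytic or cohomological obstacle; the only thing requiring genuine care is the combinatorial bookkeeping — making the summation ranges of $A$ and $B$ coincide after the index shift, which is precisely what Convention~\ref{con} achieves by annihilating the boundary Ext-terms, together with the two exceptional binomial coefficients at $j=0$ and $j=d$. I expect this bookkeeping to be the main, and essentially the only, point requiring attention.
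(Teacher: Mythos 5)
Your proof is correct and follows essentially the same route as the paper's: bound each $\ell\bigl(\Ext^j_{R'}(M',R'/I'^{\,n+1-j})\bigr)$ by the two $R$-terms coming from the sequence $0\to R/I^{m}\to R/I^{m+1}\to R'/I'^{\,m+1}\to 0$ (which is exactly Lemma \ref{extrelation}), then re-index and combine the weights by Pascal's identity, using that $\binom{d-1}{d}=\binom{d-1}{-1}=0$. Your only additions — trimming the summation ranges via Remark \ref{forcomp}(a) and Convention \ref{con}, and explicitly checking that $r$ remains a reduction number of $I/(x)$ — are harmless refinements of the same argument.
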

\begin{proof} For brevity set $R_1=R/(x)$, $I_1=I/(x)$ and $M_1=M/{xM}$. Since $G_I(R)$ is Cohen-Macaulay and $x\in I\setminus I^2$ an $R$-superficial then we have an exact sequence of the form
\[
0\rightarrow \frac{R}{{I}^m} \xrightarrow{\;\alpha_m^x\;} \frac{R}{{I}^{m+1}}\rightarrow \frac{R_1}{{I}_1^{m+1}} \rightarrow 0.
\]
This gives a long exact sequence of the form
\begin{equation*}
\cdots \rightarrow \Ext^j_{R}(M,\frac{R}{{I}^{m+1}})\rightarrow  \Ext^j_{R_1}(M_1,\frac{R_1}{{I}_1^{m+1}}) \rightarrow\Ext^{j+1}_R(M,\frac{R}{{I}^{m}}) \rightarrow \cdots.\tag{*}
\end{equation*}
By definition of $\Phi$, as $\dim M_1=d-1$, we have
\begin{eqnarray*}
\Phi^{I_1}(M_1) &=& \sum_{j=0}^{d-1}\sum_{n=0}^{r-1}\binom{d-1}{j}\ell\left(\Ext^j_{R_1}(M_1,\frac{R_1}{{I}_1^{n+1-j}}) \right)
\end{eqnarray*}
From (*), it follows that
\begin{eqnarray*}
\Phi^{I_1}(M_1) &\leqslant & \sum_{j=0}^{d-1}\sum_{n=0}^{r-1}\binom{d-1}{j} \ell\left(\Ext^{j}_R(M,\frac{R}{{I}^{n+1-j}})\right) + \\
&& \;\; \sum_{j=0}^{d-1}\sum_{n=0}^{r-1}\binom{d-1}{j}\ell\left(\Ext^{j+1}_R(M,\frac{R}{{I}^{n-j}})\right)
\\
&=& \sum_{j=0}^{d-1}\sum_{n=0}^{r-1}\binom{d-1}{j} \ell\left(\Ext^{j}_R(M,\frac{R}{{I}^{n+1-j}})\right) + \\
&& \;\; \sum_{j=1}^{d}\sum_{n=0}^{r-1}\binom{d-1}{j-1}\ell\left(\Ext^{j}_R(M,\frac{R}{{I}^{n+1-j}})\right).
\end{eqnarray*}
Notice that
\[
 \binom{d-1}{j}=0\quad\mbox{for}\ j=d \ \text{and} \ \quad \binom{d-1}{j-1}=0\quad\mbox{for}\;\;j=0.
\]
Therefore using a well-known Binomial identity we get
\[
 \Phi^{I_1}(M_1) \leqslant \sum_{j=0}^{d}\sum_{n=0}^{r-1}\binom{d}{j}\ell\left(\Ext^j_{R}(M,\frac{R}{{I}^{n+1-j}}) \right).
\]

Hence it follows that $\Phi^{I_1}(M_1) \leqslant \Phi^I(M)$.
\end{proof}

The following is an immediate consequence.

\begin{corollary}
Let the situation be as in \ref{generalcase}. Then we have
\[
\Phi(M)\geqslant \Phi(M_1)\geqslant \Phi(M_2)\geqslant \cdots \geqslant \Phi(M_d)=\Phi(N).
\]
\end{corollary}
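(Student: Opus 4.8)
The plan is to derive the whole chain at once by iterating Proposition \ref{functinequlity} along the superficial sequence $\underline{x}=x_1,\dots,x_d$ fixed in \ref{generalcase}. At the $i$-th step one applies Proposition \ref{functinequlity} with base ring $R_{i-1}$, ideal $I_{i-1}$, module $M_{i-1}$ and the $R_{i-1}$-superficial element $x_i\in I_{i-1}\setminus I_{i-1}^{2}$; this gives
\[
\Phi^{I_{i-1}}(M_{i-1})\;\geqslant\;\Phi^{I_{i}}(M_{i})\qquad (i=1,\dots,d),
\]
where $\Phi^{I_{i}}(M_{i})$ is formed with the same reduction number $r$, exactly as in the statement and proof of Proposition \ref{functinequlity}. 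Since $M_{0}=M$, $I_{0}=I$ and $(R_{d},I_{d},M_{d})=(S,J,N)$ in the notation of \ref{generalcase}, concatenating these inequalities yields $\Phi(M)\geqslant\Phi(M_{1})\geqslant\cdots\geqslant\Phi(M_{d})=\Phi(N)$.

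To make the inductive step legitimate, I would check that the hypotheses needed to invoke Proposition \ref{functinequlity} at stage $i$ are met by $R_{i-1}$, $I_{i-1}$, $M_{i-1}$. Because $\underline{x}$ is an $R$-regular sequence (each $x_i$ being $R_{i-1}$-superficial, hence a nonzerodivisor) and $R$ is Gorenstein, every $R_{i-1}$ is Gorenstein; because $\underline{x}$ is also $M$-regular, each $M_{i-1}$ is a maximal Cohen-Macaulay $R_{i-1}$-module. Finally, since $G_{I}(R)$ is Cohen-Macaulay and $x_{1}$ is superficial, its initial form $x_{1}^{*}\in G_{I}(R)_{1}$ is $G_{I}(R)$-regular and $G_{I_{1}}(R_{1})\cong G_{I}(R)/(x_{1}^{*})$ is Cohen-Macaulay; iterating, $G_{I_{i-1}}(R_{i-1})$ is Cohen-Macaulay for all $i$, so Proposition \ref{functinequlity} does apply. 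One also uses here that $x_{i}\notin I_{i-1}^{2}$, i.e. $x_{i}^{*}\neq 0$ in $G_{I_{i-1}}(R_{i-1})_{1}$, which is built into the choice in \ref{generalcase} since $x_{i}^{*}$ is a nonzerodivisor on a nonzero ring.

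The only real content beyond bookkeeping is this propagation of the Cohen-Macaulay property of the associated graded ring down the tower of superficial reductions, and it is exactly the fact already used in the proof of Proposition \ref{supbehforGor}, so I would simply record it and reuse it. The rest is the immediate telescoping asserted in the statement, which is why the corollary really is an immediate consequence of Proposition \ref{functinequlity}.
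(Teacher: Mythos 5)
Your proposal is correct and matches the paper's intent exactly: the corollary is obtained by iterating Proposition \ref{functinequlity} along the superficial sequence of \ref{generalcase}, with the hypotheses (Gorenstein quotient rings, MCM quotient modules, Cohen--Macaulayness of $G_{I_{i-1}}(R_{i-1})$ via regularity of $x_i^{*}$, and the unchanged reduction number) propagating down the tower just as you verify. The paper states this as an immediate consequence without further argument, so your added bookkeeping only makes explicit what the paper leaves implicit.
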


\begin{remark}\label{exprin0}
      Let $S$ be $0$-dimensional Gorenstein local ring, and assume that $J^{r+1}= 0$ but $J^r\neq 0$. Then
\[
\Phi^{J}(N)=\sum_{n=0}^{r-1}\ell\left(\Hom_{S}(N, \frac{S}{J^{n+1}})\right)
\]
\end{remark}

\begin{example}\label{exam1}
Let $R=\mathbf{Q}[x,y]/{(f)}$ be a hypersurface ring of dimension $d=1$, where $f=x^2+xy+y^2$ a homogeneous polynomial of degree $2$. Let $\m=\left\langle x,y \right\rangle R$ be a maximal ideal of $R$. Consider an $R$-module
$$M= \left\langle \left(\begin{array}{cc} x \\  -y  \end{array} \right),\left(\begin{array}{cc} x+y \\  x  \end{array} \right) \right\rangle \;\subseteq\; R^2 .$$
Note that $M$ is  an $MCM$ $R$-module. Clearly reduction number $r$ of $\m$ is $1$. Therefore we get
\begin{eqnarray*}
\Phi^{\m}(M) &=& \sum_{j=0}^{d}\sum_{n=0}^{r-1}\binom{\;d\;}{j}\ell\left(\Ext^j_R(M,\frac{R}{{\m}^{n+1-j}}) \right)\\
&=& \ell\left(\Hom_R(M,\frac{R}{{\m}}) \right)\\
&=& \mu(M)=2.\\
\Rightarrow\quad r\cdot e_0(M)  - \Phi^{\m}(M) & =& 0.
\end{eqnarray*}
Therefore we get $c_1^{\m}(M)\geqslant 0.$
\end{example}

We now give an example where $c_1^{I}(M)$ could be negative.

\begin{example}\label{exam2}
Let $(R,\m)$ be zero dimensional Gorenstein local ring with reduction number of $\m=2$, i.e., ${\m}^2\neq 0$ but ${\m}^3=0$. Then we get
\begin{equation*}
{\m}^2 =\socle(R) = \Hom_R(k,R)\simeq k.\tag{i}
\end{equation*}
We now take $M=k$. Consider the exact sequence
\[
0\longrightarrow {\m}^2\longrightarrow R\longrightarrow \frac{R}{{\m}^2} \longrightarrow 0.
\]
This induces the following isomorphism
\begin{equation*}
\Hom_R(k,\frac{R}{{\m}^2})\simeq \Ext_R^1(k,k).\tag{ii}
\end{equation*}
Let $\mu(\m)=h$, the minimal number of generators of $\m$. Then using (i) and (ii), we get
\begin{eqnarray*}
\sum_{n\geqslant 0}\ell\left( \Hom_R(k,\frac{R}{{\m}^{n+1}})\right)t^n &=& \frac{f^{\m}_M(t)}{1-t}\\
\Rightarrow \quad 1 + h t + \frac{t^2}{1-t} &=& \frac{f^{\m}_M(t)}{1-t}.\\
\mbox{So,}\quad f^{\m}_M(t) &=& t^2+(1-t)(1 + h t).
\end{eqnarray*}
Therefore it follows that $c^{\m}_1(M)=1-h$. So $c^{\m}_1(M)<0$ when $h\geqslant 2$.
\end{example}
A specific example of the above kind is the following:
\begin{example}\label{negacoff}
 Let $e$ be any positive integer greater than $3$. Consider
\[
R=\frac{k[[t^e,t^{e+1},\cdots,t^{2e-2}]]}{(t^e)}
\]
It follows from \cite[3.2]{SJ1} that $R$ is $0$-dimensional Gorenstein local ring with maximal ideal
\[
\m=\frac{(t^e,t^{e+1},\cdots,t^{2e-2})}{(t^e)}.
\]
Clearly $h = \mu(\m)=e-2 \geq 2$.
\end{example}

\section{ Example: Dual Hilbert-Samuel function of an Ulrich module}
\noindent In this section we assume that $R$ and its associated graded ring $G_\m(R)$ are Gorenstein. Let $L=\Syz^1_R(M),$ denote the first syzygy module of $M$.  Set $S=R/{(x)}$, $N=M/{xM}$ and $\n = \m /{xR}$.
The goal of this section is to compute the Dual Hilbert-Samuel function of an Ulrich module.
Recall that a MCM module $M$ is said to be Ulrich if $e_0^\m(M) = \mu(M)$

The following theorem gives a sufficient condition which ensures that the dual Hilbert-Samuel function of $M$ behaves "perfectly"
\wrt\ a superficial element.
\begin{theorem}\label{behevsuper1}
 Assume that $x\in \m \setminus \m^2$ is such that $x^\ast$ is $G_\m(R)\oplus G_\m(L)$-regular. Then
 \[
\varepsilon^0_M(\m,n) - \varepsilon^0_M(\m,n-1)= \varepsilon^0_{N}(\n,n) \quad \mbox{for}\;\; n \geqslant 0.
\]
\end{theorem}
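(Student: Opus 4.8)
The plan is to strip off one copy of the dual Hilbert--Samuel function modulo $x$, rephrase the claimed equality as a nonzerodivisor statement on a graded Rees module attached to $L$, and then invoke the $G_\m(L)$-hypothesis to verify it.

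I would begin by unpacking the hypothesis. Since $x^\ast$ is $G_\m(R)$-regular (and $d\geqslant 1$), the element $x$ is $R$- and $M$-regular and $(\m^{n+1}:_R x)=\m^n$ for all $n\geqslant 0$; hence for every $n\geqslant 0$ there is a short exact sequence $0\to R/\m^n\xrightarrow{\mu_x}R/\m^{n+1}\to S/\n^{n+1}\to 0$, where $\mu_x(\bar a)=\overline{xa}$ and $S/\n^{n+1}\cong R/(\m^{n+1}+xR)$. Applying $\Hom_R(M,-)$ and using $\Hom_R(M,S/\n^{n+1})\cong\Hom_S(N,S/\n^{n+1})$ (\cite[\S 18, Lemma 2]{Mat}), the long exact sequence gives, for all $n\geqslant 0$,
\[
\varepsilon^0_N(\n,n)=\bigl(\varepsilon^0_M(\m,n)-\varepsilon^0_M(\m,n-1)\bigr)+\ell(\ker\phi_n),\qquad \phi_n\colon \Ext^1_R\!\Bigl(M,\frac{R}{\m^n}\Bigr)\to \Ext^1_R\!\Bigl(M,\frac{R}{\m^{n+1}}\Bigr),
\]
(with $\varepsilon^0_M(\m,-1):=0$), where $\phi_n$ is induced by $\mu_x$. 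So the theorem is equivalent to $\ker\phi_n=0$ for all $n\geqslant 1$.

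Next I would turn this into a nonzerodivisor statement. Using $0\to\m^n\to R\to R/\m^n\to 0$ and $\Ext^1_R(M,R)=\Ext^2_R(M,R)=0$ ($M$ is MCM, $R$ Gorenstein), the connecting homomorphisms give an isomorphism of graded $\mathcal{R}(\m)$-modules $D^1_\m(M)\cong E^2_\m(M)$ (Lemma \ref{hiltfgext}(a), with $\omega=R$), and since multiplication by $x$ is a morphism of the relevant short exact sequences, $\bigoplus_n\phi_n$ becomes multiplication by $xt\in\mathcal{R}(\m)_1$. Applying $\Hom_R(-,\m^n)$ to $0\to L\to R^{\mu(M)}\to M\to 0$ (with $R^{\mu(M)}$ free) gives further $xt$-compatible isomorphisms $E^2_\m(M)\cong E^1_\m(L)=\bigoplus_{n\geqslant 0}\Ext^1_R(L,\m^n)$, a finitely generated graded $\mathcal{R}(\m)$-module. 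Thus it suffices to show that $xt$ is a nonzerodivisor on $E^1_\m(L)$.

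This last point is the heart of the matter and is where $G_\m(L)$-regularity of $x^\ast$ is needed. Again $G_\m(R)$-regularity gives exact sequences $0\to\m^n\xrightarrow{x}\m^{n+1}\to\n^{n+1}\to 0$ of $R$-modules ($\n^{n+1}\cong\m^{n+1}/x\m^n$, an $S$-module); applying $\Hom_R(L,-)$ and using $\Hom_R(L,\n^{n+1})\cong\Hom_S(L/xL,\n^{n+1})$ identifies
\[
\ker\bigl(xt\colon \Ext^1_R(L,\m^n)\to\Ext^1_R(L,\m^{n+1})\bigr)\ \cong\ \coker\bigl(\Hom_R(L,\m^{n+1})\to\Hom_S(L/xL,\n^{n+1})\bigr),
\]
so one must prove that every $S$-linear map $L/xL\to\n^{n+1}$ lifts to an $R$-linear map $L\to\m^{n+1}$, for every $n\geqslant 0$. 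Here I would use the $G_\m(L)$-regularity of $x^\ast$ --- equivalently the equalities $(\m^{j+1}L:_L x)=\m^jL$ for all $j$, equivalently the exact sequences $0\to L/\m^jL\xrightarrow{x}L/\m^{j+1}L\to (L/xL)/\n^{j+1}(L/xL)\to 0$ --- to construct the lifts, either degree by degree along these sequences or by comparing the two stable $\m$-filtrations $\{\Hom_R(L,\m^n)\}_n$ and $\{\m^nL^\ast\}_n$ on $L^\ast=\Hom_R(L,R)$. I expect this to be the main obstacle: the displayed description of $\ker(xt)$ is a tautology coming from the long exact sequence, so the genuine content is to force the resulting obstruction classes in $\Ext^1_R(L,\m^n)$ to vanish, and it is precisely the $\m$-adic ``$x^\ast$-torsion-freeness'' of $G_\m(L)$ that makes this happen. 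With $xt$ shown to be a nonzerodivisor on $E^1_\m(L)$, the preceding reductions give $\ker\phi_n=0$ for all $n$, which is the theorem.
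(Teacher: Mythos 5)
Your reductions are correct and they coincide step for step with the paper's own argument: the theorem is equivalent to injectivity of multiplication by $xt$ on $D^1_\m(M)$; since $R$ is Gorenstein and $M$ is MCM (so $\omega=R$ and $\Ext^1_R(M,R)=\Ext^2_R(M,R)=0$), this module is identified, compatibly with $xt$, with $E^1_\m(L)=\bigoplus_{n\geqslant 0}\Ext^1_R(L,\m^n)$; and injectivity of $xt$ there is in turn equivalent to surjectivity of $\Hom_R(L,\m^{n+1})\to\Hom_S(L/xL,\n^{n+1})$ for every $n$. But at exactly this point you stop: you say you ``would'' construct the lifts degree by degree or by comparing the filtrations $\{\Hom_R(L,\m^n)\}_n$ and $\{\m^nL^\ast\}_n$, and that you ``expect'' $G_\m(L)$-regularity of $x^\ast$ to kill the obstruction classes. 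No argument is offered, and this lifting statement is the entire content of the theorem. In the paper it is precisely the exactness of $0\to\Hom_R(L,\m^n)\to\Hom_R(L,\m^{n+1})\to\Hom_R(L,\n^{n+1})\to 0$ for all $n\geqslant 0$, which is not proved there either but quoted from \cite[11.5]{TJP}, a nontrivial theorem on dual filtrations.

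Moreover, the hypotheses you propose to use at the missing step are too weak. $G_\m(L)$-regularity of $x^\ast$ is a statement about the $\m$-adic filtration on $L$ itself (it gives $(\m^{j+1}L:_Lx)=\m^jL$, i.e.\ exactness of the sequences $0\to L/\m^jL\to L/\m^{j+1}L\to (L/xL)/\n^{j+1}(L/xL)\to 0$), whereas what must be proved concerns the \emph{dual} filtration $\{\Hom_R(L,\m^n)\}_n$ on $L^\ast$; passing from one to the other is a duality statement, and it is here that the standing hypothesis of Section 6 that $G_\m(R)$ is \emph{Gorenstein} — which you never invoke at this step, using only $R$ Gorenstein to get $\omega=R$ — enters, via the Gorenstein-approximation/dual-filtration machinery of \cite{TJP}. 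Note also that the lifting you do get for free ($\Hom_R(L,R)\to\Hom_R(L,S)$ is onto because $\Ext^1_R(L,R)=0$) lifts an $S$-map $L/xL\to\n^{n+1}\subseteq S$ only to an $R$-map $L\to R$, with no control that the lift lands in $\m^{n+1}$; forcing it into $\m^{n+1}$ is exactly the missing point. So your proposal reproduces the paper's reductions faithfully but has a genuine gap where the paper invokes \cite[11.5]{TJP}.
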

\begin{proof} Consider the following exact sequence
\[
 0\rightarrow \m^n \rightarrow R \rightarrow \frac{R}{\m^n}\rightarrow 0.
\]
This induces that
\begin{equation*}
D^1_\m(M)\simeq \bigoplus_{n\geqslant 0} \Ext^2_R(M,\m^n)\simeq\Ext^2_R(M,{\mathcal R}(\m)).\tag{i}
\end{equation*}
 Also the exact sequence
\[
0\longrightarrow L \longrightarrow R^{\mu(M)} \longrightarrow M\longrightarrow 0
\]
yields the following isomorphism
\begin{equation*}
\Ext^1_R(L,{\mathcal R}(\m))\simeq \Ext^2_R(M,{\mathcal R}(\m)).\tag{ii}
\end{equation*}
From (i) and (ii), it follows that
\begin{equation*}
D^1_\m(M) \simeq \Ext^1_R(L,{\mathcal R}(\m)).\tag{iii}
\end{equation*}
Since $G_\m(R)$ is Gorenstein and $G_\m(L)$ and the homogeneous element $x^\ast\in {\mathcal R}(\m)_1$ is $G_\m(R)\oplus G_\m(L)$-regular then by \cite[11.5]{TJP}, the sequence
\[
 0\rightarrow \Hom_R(L,\m^n)\rightarrow \Hom_R(L,\m^{n+1})\longrightarrow \Hom_R(L,\n^{n+1})\longrightarrow 0
\]
is exact for all $n \geqslant 0$. Therefore it follows that the maps
\[
 \Ext_R^1(L,\m^n)\rightarrow \Ext_R^1(L, \m^{n+1})\quad\mbox{ are injective for all}\;\;n\geqslant 0
\]
Using (iii) the maps
\[
\Ext^1_R(M,\frac{R}{\m^n})\xrightarrow{\;\;xt\;\;}\Ext^1_R(M,\frac{R}{\m^{n+1}})
\]
are injective for all $n\geqslant 1$. Therefore the short exact sequence
\[
0\rightarrow \frac{R}{{\m}^n} \xrightarrow{\;\alpha_n^x\;} \frac{R}{{\m}^{n+1}}\rightarrow \frac{S}{\n^{n+1}} \rightarrow 0.
\]
 induces a short exact of the form
\[
0\rightarrow \Hom_R(M,\frac{R}{\m^n})\xrightarrow{\;\;x\;\;}\Hom_R(M,\frac{R}{\m^{n+1}})\rightarrow\Hom_R(N,\frac{S}{\n^{n+1}})\rightarrow 0.
\]
This proves our assertion.
\end{proof}

\s {\bf The case when $M$ is Ulrich module:}
 Let $M$ be Ulrich $R$-module. Then there exists $J=(x_1,x_2,\ldots,x_d)$, a minimal reduction of $\m$ such that $JM={\m}M$. Therefore we get
 \[
  G_{\m}(M)\simeq \frac{M}{\m M}[\bar{x_1},\bar{x_2},\ldots,\bar{x_d}].
 \]
Let $\mu(M)$ denote minimal number of generators of $M$. Then $M/{JM}\simeq k^{\mu(M)}$.

\s \label{zero-Gor} Assume $(S,\n, k)$ is a  $0$-dimensional Gorenstein local ring  with $G_{n}(S)$ Gorenstein. So there exists a positive integer $r$ such that ${\n}^r\neq 0$ but ${\n}^{r+1}=0$. From the exact sequence
\[
 0\longrightarrow {\n}^i\longrightarrow S \longrightarrow \frac{S}{{\n}^i}\longrightarrow 0
\]
one can verify that
\begin{eqnarray*}
      \Hom_S(k,\frac{S}{{\n}^i}) \simeq
                      \left \{
                     \begin{array}{l}
                   \Ext^1_S(k,{\n}^i)\quad\mbox{for}\;i=1,2,\ldots,r\\
                     k\;\quad\quad\quad\quad\quad\mbox{for}\;i>r.
                     \end{array}
                     \right .
\end{eqnarray*}
Also notice that $\Ext^1_S(k,{\n}^i)=k^{\mu_1({\n}^i)}$, where $\mu_1({\n}^i)$ is the first {\it Bass number} of ${\n}^i.$ Now we can compute the dual Hilbert-Samuel series of $k$ with respect to $\n$.
\begin{eqnarray*}
 D(k,t)&=&\mu_1({\n})+\mu_1({\n}^2) t+\ldots+\mu_1({\n}^r)t^{r-1}+\frac{t^{r+1}}{(1-t)}\\
 &=& \frac{(1-t)\cdot[\sum_{i=0}^{r-1}\mu_1({\n}^{i+1})\cdot t^i)]+t^{r+1}}{(1-t)}
\end{eqnarray*}

\begin{proposition}\label{ulrichprop1}
 Let $M$ be an Ulrich $R$-module.  Then we have
 \[
  D(M,t)=\frac{\mu(M)\cdot\left[ (1-t)\cdot[\sum_{i=0}^{r-1}\mu_1({\n}^{i+1})\cdot t^i)]+t^{r+1}\right] }{{(1-t)}^{d+1}}.
 \]
\end{proposition}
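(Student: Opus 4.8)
The plan is to reduce the computation to the $0$-dimensional situation of \ref{zero-Gor} by cutting $M$ down by a maximal superficial sequence, controlling each cut with Theorem \ref{behevsuper1}. Since $R$ is Gorenstein we have $\omega_R=R$, so $D(M,t)=\sum_{n\geqslant 0}\ell\bigl(\Hom_R(M,R/\m^{n+1})\bigr)t^n$. Set $L=\Syz^1_R(M)$; as $M$ is MCM over a CM ring, $L$ is again MCM, and by the result of \cite{TJPW} the module $G_\m(R)\oplus G_\m(L)$ is Cohen--Macaulay of dimension $d$. Since the residue field is infinite I can choose $x_1,\ldots,x_d\in\m\setminus\m^2$ which simultaneously (i) form a minimal reduction $J=(x_1,\ldots,x_d)$ of $\m$ with $JM=\m M$ (possible because $M$ is Ulrich), and (ii) at every stage $i$ have the property that the initial form $x_i^{*}$ is regular on $G_{\m_{i-1}}(R_{i-1})\oplus G_{\m_{i-1}}(L_{i-1})$, where $R_{i-1}=R/(x_1,\ldots,x_{i-1})$, $M_{i-1}=M/(x_1,\ldots,x_{i-1})M$, and $L_{i-1}=\Syz^1_{R_{i-1}}(M_{i-1})$. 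By Valabrega--Valla, quotienting a Cohen--Macaulay associated graded module by the initial form of a superficial element again gives a Cohen--Macaulay associated graded module of one less dimension, and (as in the proof of Theorem \ref{condoffree}) the syzygy functor commutes with this reduction, $L_i\cong L_{i-1}/x_iL_{i-1}$; so the hypothesis of Theorem \ref{behevsuper1} persists through all $d$ steps, and each $R_{i-1}$ is Gorenstein with $G_{\m_{i-1}}(R_{i-1})$ Gorenstein and $M_{i-1}$ MCM.

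Next I would telescope. Applying Theorem \ref{behevsuper1} at stage $i$ gives $\varepsilon^0_{M_{i-1}}(\m_{i-1},n)-\varepsilon^0_{M_{i-1}}(\m_{i-1},n-1)=\varepsilon^0_{M_i}(\m_i,n)$ for \emph{all} $n\geqslant 0$, which translates into the exact power-series identity $(1-t)\,D(M_{i-1},t)=D(M_i,t)$. Iterating $d$ times gives
\[
(1-t)^d\,D(M,t)=D(N,t),\qquad N:=M/JM,\quad S:=R/J .
\]
Because $JM=\m M$ we have $N=M/\m M\cong k^{\mu(M)}$ as an $S$-module, so additivity of the dual Hilbert--Samuel series over direct sums yields $D(N,t)=\mu(M)\,D(k,t)$, the right-hand side computed over $S$.

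Now $S=R/J$ is $0$-dimensional Gorenstein, being the quotient of the Gorenstein ring $R$ by the regular sequence $x_1,\ldots,x_d$; and since $x_1^{*},\ldots,x_d^{*}$ is a $G_\m(R)$-regular sequence we have $G_\n(S)\cong G_\m(R)/(x_1^{*},\ldots,x_d^{*})$, which is Gorenstein. Thus $S$ satisfies the hypotheses of \ref{zero-Gor}, with the integer $r$ there (defined by $\n^r\neq 0=\n^{r+1}$) equal to the reduction number of $\m$. Hence \ref{zero-Gor} gives
\[
D(k,t)=\frac{(1-t)\,\bigl[\sum_{i=0}^{r-1}\mu_1(\n^{i+1})\,t^i\bigr]+t^{r+1}}{1-t},
\]
and substituting into $D(M,t)=\mu(M)\,D(k,t)/(1-t)^d$ gives exactly the asserted formula.

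The main obstacle is the first paragraph: one must be confident that a single sequence $x_1,\ldots,x_d$ can be chosen with all the stated properties at once, and in particular that the hypothesis of Theorem \ref{behevsuper1} survives each reduction step — which is precisely where the Cohen--Macaulayness of $G_\m(L)$ provided by \cite{TJPW} enters. Without it the identity $(1-t)D(M_{i-1},t)=D(M_i,t)$ would hold only up to a polynomial correction term of degree $\leqslant d-1$, and the telescoping would no longer produce a clean rational expression. The remaining ingredients (the Gorenstein property descending to $S$ and $G_\n(S)$, the isomorphism $N\cong k^{\mu(M)}$, additivity of $D(-,t)$) are routine.
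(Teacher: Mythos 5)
Your argument is correct and follows essentially the same route as the paper's proof: use the Ulrich property to get a minimal reduction $J$ with $JM=\m M$, invoke the result of \cite{TJPW} on the Cohen--Macaulayness of $G_\m(\Syz^1_R(M))$ so that Theorem \ref{behevsuper1} can be applied at each cut to obtain $(1-t)^d D(M,t)=D(M/JM,t)=\mu(M)D(k,t)$, and then conclude with the $0$-dimensional computation of \ref{zero-Gor}. The only difference is that you spell out the persistence of the hypotheses of Theorem \ref{behevsuper1} through the successive reductions (via the isomorphisms $G_{\m_i}(L_i)\cong G_\m(L)/(x_1^*,\ldots,x_i^*)$ and $L_i\cong L_{i-1}/x_iL_{i-1}$), which the paper leaves implicit.
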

\begin{proof}
Since $M$ is Ulrich $R$-module, there exists $J=(x_1,x_2,\ldots,x_d)$, a minimal reduction of $\m$ such that $JM={\m}M$. Set $\bar M=M/JM$. In \cite{TJPW} we prove that $G\m(\Syz^{A}_{1}(M))$ is \CM. In view of Theorem \ref{behevsuper1}, it follows that
 \begin{eqnarray*}
  D(M,t)&=& \frac{D(\bar M,t)}{{(1-t)}^d} = \frac{D(k^{\mu(M)},t)}{{(1-t)}^d}\\
  &=& \mu(M)\cdot \frac{D(k,t)}{{(1-t)}^d}\\
  &=& \frac{\mu(M)\cdot\left[ (1-t)\cdot[\sum_{i=0}^{r-1}\mu_1({\n}^{i+1})\cdot t^i)]+t^{r+1}\right] }{{(1-t)}^{d+1}}
  \end{eqnarray*}
  The last equality follows from 6.3.
\end{proof}

\providecommand{\bysame}{\leavevmode\hbox to3em{\hrulefill}\thinspace}
\providecommand{\MR}{\relax\ifhmode\unskip\space\fi MR }
\providecommand{\MRhref}[2]{%
  \href{http://www.ams.org/mathscinet-getitem?mr=#1}{#2}
}
\providecommand{\href}[2]{#2}

\end{document}